\newtheorem{theorem}{Theorem}[section]
\newtheorem{lemma}[theorem]{Lemma}
\newtheorem{proposition}[theorem]{Proposition}
\newtheorem{corollary}[theorem]{Corollary}
\newtheorem{question}[theorem]{Question}
\newtheorem{definition}[theorem]{Definition}
\numberwithin{equation}{section}
\begin{document}

\newcommand{\cc}{\mathfrak{c}}
\newcommand{\N}{\mathbb{N}}
\newcommand{\Q}{\mathbb{Q}}
\newcommand{\R}{\mathbb{R}}
\newcommand{\T}{\mathbb{T}}
\newcommand{\st}{*}
\newcommand{\PP}{\mathbb{P}}
\newcommand{\SSS}{\mathbb{S}}
\newcommand{\forces}{\Vdash}
\newcommand{\dom}{\text{dom}}
\newcommand{\osc}{\text{osc}}

\title[totally disconnected reflection of nonmetrizability]{On the problem of compact totally 
disconnected reflection of nonmetrizability}

\author{Piotr Koszmider}
\address{Institute of Mathematics, Polish Academy of Sciences,
ul. \'Sniadeckich 8,  00-656 Warszawa, Poland}
\email{\texttt{piotr.koszmider@impan.pl}}
\thanks{This research  was partially supported by   grant
PVE Ci\^encia sem Fronteiras - CNPq (406239/2013-4).} 

\subjclass[2010]{54D30,  03E35, 46B25}
\begin{abstract} 
We construct a ZFC example of a nonmetrizable compact space $K$ such that every totally
disconnected closed subspace $L\subseteq K$ is metrizable. In fact, 
the construction can be arranged so that every nonmetrizable compact subspace
may be of fixed big dimension.
Then we focus on the problem if  a nonmetrizable compact space $K$ must
have a closed subspace with a nonmetrizable  totally
disconnected  continuous image. This question has several links with the
the structure of the Banach space $C(K)$, for example, by Holszty\'nski's theorem, if $K$ is
a counterexample, then $C(K)$ contains no isometric
copy of a nonseparable Banach space $C(L)$ for $L$ totally disconnected.
We show that  in the literature there are diverse consistent counterexamples, most 
eliminated by Martin's axiom and the negation of the continuum hypothesis, but
some consistent with it.  We analyze the above problem for a particular class of spaces.
OCA+MA however, implies the nonexistence of any
counterexample in this class but the existence of some other absolute example
remains open.
\end{abstract}

\maketitle

\section{introduction}

This paper is related to the question whether a  nonmetrizable space must have a nice, in some sense, 
nonmetrizable subspace. If the nice subspace that we seek means 
a subspace of small cardinality, positive consistent answers to this question  were obtained  by
Alan Dow and others, for example,   in \cite{twoapplications}, \cite{submodels}, \cite{dtw2}, 
\cite{semi}, \cite{vaughan}. 
When one restrict oneself to compact Hausdorff spaces the question if every nonmetrizable
compact Hausdorff space has a nonmetrizable subspace of cardinality $\omega_1$
has the positive answer in ZFC as proved by Alan Dow in \cite{empty}. Here we will 
ask about the reflection of the nonmetrizability for compact Hausdorff spaces
to another type of nonmetrizable subspaces or quotient spaces,
namely we want them to be totally disconnected and compact. Thus, the main questions
are:

\begin{question}\label{questions1} Suppose that $K$ is compact Hausdorff space which is nonmetrizable.
\begin{enumerate}
\item Is there $L\subseteq K$ which is compact, nonmetrizable and totally disconnected?
\item Is there a closed subspace $K'\subseteq K$ and
a continuous surjective map $\phi: K'\rightarrow L$  such that $L$ is nonmetrizable and
totally disconnected? 
\end{enumerate}
\end{question}
It is worthy to note that
in Question \ref{questions1} (2) (see \cite{opit}, Question 4 (1176)) instead of continuous images of 
closed subspaces we could consider  closed subspaces of continuous images (Lemma \ref{subquotient}).
Consistent examples providing positive answer to Question \ref{questions1} (1) have been
well known, for example,
assuming the continuum hypothesis (CH),
V. Fedorchuk showed that there are compact spaces where every infinite closed
subspace has big dimension (\cite{fedorchuk}), assuming $\diamondsuit$
M. E. Rudin and P. Zenor constructed a nonmetrizable manifold where
all closed subsets are metrizable or contain many copies of euclidean intervals
(\cite{zenor}, 3.14 of \cite{nyikos}).  It seems to be folkloric knowledge
that the Souslin continuum is another example. Assuming $\clubsuit$ it is possible to
construct a  $\T$-bundle over the long ray like in Example 6.17 of \cite{nyikos} whose one point
compactification  provides another example. Some of these examples are consistent with any cardinal arithmetic, but
some have continuous image, the compactification of the long ray, which contains 
a nonmetrizable totally disconnected subspace $[0,\omega_1]$. 
To obtain counterexamples to the second question from the above examples one needs 
to do a bit more work. We review these and other examples in the
context of Question \ref{questions1} (2) in Proposition \ref{properties}. 

In this note we focus especially on constructions of compact spaces  of certain concrete  type
which do not need to be locally compact as many of the above examples, which we call
 split compact spaces in the analogy to the usual split interval (see e.g. \cite{godefroy}).
Given a metrizable compact $M$, its points $\{r_\xi: \xi<\kappa\}$ for some 
cardinal $\kappa$ and the splitting continuous functions $f_\xi:M\setminus\{r_\xi\}\rightarrow 
K_\xi$ where $K_\xi$s are compact and metrizable we consider the split
$M$ induced by $(f_\xi)_{\xi<\kappa}$, for precise definition see \ref{splitdefinition}.
In particular for us a split interval has a more general meaning than the usual split interval,
to underline this difference we will talk about unordered split intervals in the nonclassical case.
Such topological constructions can be traced back to Fedorchuk's school 
and found many applications in topology and in particular dimension theory (see \cite{fedorchuksur}),
and were rediscovered by Koppelberg in the context of totally disconnected spaces.
Recently they  and similar spaces have been applied in functional analysis in 
the connected version in \cite{equilateral} and totally disconnected version 
in \cite{rolewicz}, \cite{biorthogonal}.

The  paper can be summarized as an attempt to construct spaces providing
negative answers to Questions \ref{questions1} (1) and (2) of the above form.
Our main results are:
{\it\begin{enumerate}[(a)]
\item  There is (in ZFC) a nonmetrizable compact Hausdorff space where every totally disconnected
compact subspace is metrizable. Our example  is  an unordered split interval. (Theorem \ref{zfcexample}).
\vskip 3pt
\item  Assuming the existence of a Luzin set\footnote{Recall that a Luzin set is an uncountable subset of the reals which meets every
nowhere dense set only on a countable subset.
Note that the assumption of the existence of a Luzin set is consistent with any cardinal arithmetics
(just add $\omega_1$ Cohen reals), it follows from CH and under the failure
of CH it contradicts Martin's axiom.}
there is an unordered split interval which is
a nonmetrizable compact Hausdorff space without a continuous image containing a 
nonmetrizable totally disconnected closed subspace.  (Theorem \ref{luzinexample}).
\vskip 3pt
\item The existence of  a compact space with no subspace with a  continuous image
which is a nonmetrizable and  totally disconnected 
 is consistent with Martin's axiom (MA) and the 
negation of CH. This is the Filippov split square, but 
split intervals or other examples can be arranged as well. (Theorem \ref{macon}).
\vskip 3pt
\item Assuming the Open Coloring Axiom\footnote{Recall that   OCA (see 8.0 of \cite{partition}) 
developed by Abraham, Shelah and Todorcevic says that for
any partition $[X]^2=K_0\cup K_1$ of a subset $X$ of the reals such that $K_0$ is open
in the product topology on $X\times X$  there is either an uncountable
$Y\subseteq X$ such that $[Y]^2\subseteq K_0$ or $X=\bigcup_{n\in \N}X_n$
where $[X_n]^2\subseteq K_1$ for each $n\in \N$. OCA is consistent with MA, 
implies the failure of CH and follows from the Proper Forcing Axiom PFA or Martin's
Maximum.} (OCA) 
every nonmetrizable  split compact space has a continuous image with a 
nonmetrizable totally disconnected closed subspace. (Theorem \ref{ocaresult}). 
\end{enumerate} }

Our approach of considering split compact spaces as in Definition \ref{splitdefinition}
to attack Question \ref{questions1} (1) turns out to be successful.
Our ZFC example from Theorem \ref{zfcexample}
which answers Question \ref{questions1} (1) is based on a simple combinatorial principle (Lemma \ref{cpol})  
discovered by K. Ciesielski and R. Pol in \cite{cpol} as Remark 7.1
(see also \cite{godefroypams}). It can be sometimes used to replace
an application  of $\clubsuit$ by a ZFC argument. Its more complicated versions
were used by A. Dow and coauthors in Examples 2.15 and 2.16 of \cite{djp} 
others were used in Section 5 of \cite{rn},
however, we feel that despite their simplicity these principles are not widely known.

However, our results concerning Question \ref{questions1} (2) show that attacking this question
with split compact spaces as in Definition \ref{splitdefinition} is not optimal
in the sense that taking this way it turns out that we end up facing a well-known and apparently harder problem
whether locally connected perfectly normal compact spaces must be metrizable (see e.g. \cite{kunenloc}).
Despite apparent much bigger flexibility of the split intervals
or in general split compact spaces compared to such locally compact examples,
already evident when considering the closure of the graph of $\sin(1/x)$, $x\in (0,1]$,
the combinatorial essence of interesting counterexamples turns out to be the same as for
locally compact examples (compare our Proposition \ref{weaklyluzin} and Theorem 2.5. of 
\cite{kunenloc}).  Also the OCA annihilates the examples the same way
adding a continuous image which has the one point compactification of
the uncountable discrete space as the totally disconnected subspace.
To avoid repetitions we decided to present the results  in the generality of 
split compact spaces (\ref{splitdefinition}) which   allows to 
 rely heavily in (c) and (d) on the results of K. Kunen from  \cite{kunenloc} concerning Fillipov's spaces.
As counterexamples to Question \ref{questions1} (2), to survive the impact of MA+$\neg$CH,
must be hereditarily Lindel\"of and hereditarily separable (see \ref{properties})
one perhaps could consider connected and ZFC versions of constructions like in \cite{rolewicz}
and \cite{biorthogonal} for which Kunen's OCA argument does not apply and the spaces
are still preimages of metric spaces with metrizable fibers.

The totally disconnected reflection of the nonmetrizability for compact spaces 
in the sense of  Question \ref{questions1}
has another  strong motivation coming from functional analysis.
The special role of Banach spaces of the form $C(K)$ for $K$ compact, Hausdorff
and totally disconnected in the general theory of Banach spaces has been evident
since the beginnings of this theory. Starting with Schreier's analysis of the space
$C([0,\omega^\omega])$ which answered Banach's question if nonisomorphic Banach spaces may have
isomorphic duals, through Johnson-Lindenstrauss', Haydon's, Talagrand's, Argyros' and other now
classical examples, the clear combinatorial structure of the Boolean algebra
$Clop(K)$ of the clopen subsets of $K$ and its generation of the dense subspace of $C(K)$
of simple functions served as a miraculous tool multiplying interesting examples and counterexamples
relevant in the general theory of Banach spaces.

Since the isomorphic classification of separable Banach spaces of the form $C(K)$
(equivalently for $K$ metrizable) due to Milutin, Bessaga and Pe\l czy\'nski (\cite{classification})
which implied that every such $C(K)$ is isomorphic to a $C(L)$ for $L$ totally
disconnected the issue whether this is the case for nonmetrizable  compact $K$s
has emerged (\cite{semadeni}). Despite some progress in this
direction, e.g. showing it for arbitrary compact
topological groups (\cite{diss}) it turned out only recently that there are
$C(K)$s not isomorphic to $C(L)$s or $L$ totally disconnected (\cite{few},
for further references see \cite{fewsur}) and that such $K$s can be relatively
nice like in \cite{rn}. Hence we cannot assume in the isomorphic theory
that all Banach spaces  $C(K)$ are given by totally disconnected compact $K$s.
So the next natural question is whether given a Banach space $C(K)$ we can associate
with it a $C(L)$ for $L$ totally disconnected and  compact such that $C(L)$ provides
some useful information about the $C(K)$. For example, one classical result of 
 S. Ditor is that there is $L$ 
of the same weight as $K$ such that $C(K)$ is $1$-complemented in $C(L)$ (\cite{ditor}).
In this context it is natural to ask the following:
\begin{question}\label{questions2}
Suppose that $K$ is a compact Hausdorff nonmetrizable space.
Is there compact Hausdorff nonmetrizable totally disconnected $L$ such that:
\begin{itemize}
\item $C(L)$ embeds linearly and isometrically as a Banach space into $C(K)$?
\end{itemize}
\end{question}
It remains open if the above question has consistently positive answer. On the other hand
compact spaces $K$ with no subspace with a continuous nonmetrizable 
totally disconnected image give the negative answer to Question \ref{questions2}. 
This follows from a result of Holszty\'nski
(\cite{holsztynski})  which says that isomorphic embedding of a $C(L)$ into $C(K)$ is 
always induced by continuous map of closed subset of $K$ onto $L$. Although 
in general the existence of such a map is not equivalent to the existence of the isometric embedding
note that if $K'\subseteq K$ and $\phi:K'\rightarrow L$ is a continuous surjection
and $L$ is nonmetrizable totally disconnected then there 
are (possibly nonlinear) isometries $\Psi: C(L)\rightarrow C(K)$ and
$\Psi^*: C(L)^*\rightarrow C(K)^*$ such that $\mu(f)=\Psi^*(\mu)(\Psi(f))$
for every $f\in C(L)$ and for every functional $\mu\in C(L)^*$.
They can be obtained by the Tietze theorem and  by the Hahn-Banach theorem.
More concretely, $\Psi(f)$ is a supremum norm preserving extension 
of $f\circ\phi$ from $K'$ to $K$ and $\Psi^*(\mu)$ is the Radon measure
on $K$ concentrated on $K'$ obtained by extending the functional 
on the subspace $\{f\circ \phi: f\in C(L)\}$ which corresponds to the 
functional on $C(L)$ defined by the measure $\mu$.
$\Psi$ and $\Psi^*$ can be quite useful for transferring the consequences
of known theorems proved for totally disconnected $L$s to general $K$s.
This is at least relevant for nice biorthogonal systems (see \cite{dzamonja}, \cite{cardinal}),
equilateral sets (\cite{equilateralcofk}, \cite{equilateral}) or sets separated by
more than one (\cite{kania}). Even in the negative direction we can make a new
observation concerning equilateral sets in Banach spaces (Corollary \ref{equilateral}).

Without mentioning we will often be using basic facts concerning compact spaces  like the equivalence
of the zero dimensionality and the total disconnectedness,  or the equivalence of the metrizability and
the existence of a countable family of continuous functions which separate the points
of the space or the dependence of
continuous functions in the products on countably many coordinates. 
We refer to the book \cite{engelking} of R. Engelking  for these issues. All
topological spaces considered in this paper are Hausdorff.

\section{Splitting compact metrizable spaces}

Given a cardinal $\kappa$, a compact metrizable $M$
with no isolated points , a sequence $(K_\xi)_{\xi<\kappa}$
of metrizable compact spaces  and  functions $f_\xi: [0,1]\setminus\{r_\xi\}\rightarrow K_\xi$
 for some distinct  $r_\xi\in M$
for $\xi<\kappa$   we may define a natural  version of the 
split interval (see e.g. \cite{godefroy}) which can be naturally embedded in the product 
space $L\times\Pi_{\xi<\kappa}K_\xi$.

\begin{definition}\label{splitdefinition}{\rm\cite{equilateral}} Let $\kappa\leq 2^\omega$  be a cardinal.
Let $M$ compact Hausdorff metrizable and with no isolated points. Let $K_\xi$ for $\xi<\kappa$ be compact Hausdorff metrizable spaces.
Suppose that $\{r_\xi: \xi<\kappa\}$ consists of distinct elements of  $M$,
$f_\xi: M\setminus\{r_\xi\}\rightarrow K_\xi$ is
 a continuous function such that $f_\xi[U\setminus\{r_\xi\}]$
is dense in $K_\xi$  for every open neighbourhood $U$ of $r_\xi$ for every  $\xi<\kappa$.  

 A  split $M$
 induced by $(f_\xi)_{\xi<\kappa}$ 
is the subspace $K$ of $M^{\{\st\}}\times \Pi_{\xi<\kappa}K_\xi$ consisting of points of the form 
\[\{x_{\xi, t}, : \xi<\kappa, t\in K_\xi\}\cup\{x_r: r\in M\setminus\{r_\xi: \xi<\kappa\}\}, \]
where
\begin{enumerate}
\item $x_{\xi, t}(\st)=r_\xi$, $x_{\xi, t}(\xi)=t$ and $x_{\xi, t}(\eta)=f_\eta(r_\xi)$
 if $\eta\in \kappa\setminus\{\xi\}$,
\item $x_r(\st)=r$ and $x_r(\xi)=f_\xi(r)$ for all 
$r\in M\setminus\{r_\xi: \xi<\kappa\}$ and $\xi<\kappa$.
\end{enumerate}
Under these assumptions we will use the following notation and terminology:
\begin{enumerate}
\item $U_K=\{x\in K: x(\st)\in U\}$ for $U\subseteq M$,
\item $U_{K,\xi}=\{x\in K: x(\xi)\in U\}$  for any $U\subseteq K_\xi$ and every $\xi<\kappa$,
\item $R_\xi=\{ x_{\xi,t}: t\in K_\xi\}$ for all $\xi<\kappa$,
\item $f_\xi$s will be called the splitting functions.
\end{enumerate}
\end{definition}

Thus, the classical split interval  $S$ is obtained by choosing 
$M=[0,1]=\{r_\xi: \xi<2^\omega\}$, $K_\xi=\{0,1\}$,
 $f_\xi: [0,1]\setminus\{r_\xi\}\rightarrow \{0,1\}$ defined by
$f_\xi(r)=0$ if $r<r_\xi$ and $f_\xi(r)=1$ if $r>r_\xi$.
In \cite{equilateral} we considered $f_\xi: [0,1]\setminus\{r_\xi\}\rightarrow [0,1]$ 
modeled after $f_\xi(x)=\sin({1\over{|x-r_\xi|}})$.
In \cite{filippov} (cf. \cite{kunenloc}) V. Filippov considered $M=[0,1]^2$, 
$f_\xi: [0,1]^2\setminus\{r_\xi\}\rightarrow \T$   given by $f_\xi(x)={{x-r_\xi}\over {||x-r_\xi||}}$
where, $\T$ is the unit sphere in $\R^2$ and  $\{r_\xi: \xi<\kappa\}=E$ is a chosen 
subset of $[0,1]^2$. In \cite{kunenloc} Kunen calls
this space the Filippov space and denotes it $\Phi_E$, we will
follow this convention.

Note that it follows from the definition of the split  $M$ that
the only point $x$ of $K$ such that $x(\st)=r$ is $x_r$ if 
$r\in M\setminus\{r_\xi: \xi<\kappa\}$ and that the only points
 $x$ of $K$ such that $x(\st)=r_\xi$ for $\xi<\kappa$ are the points of $R_\xi$ that is  $x_{\xi, t}$s 
for $t\in K_\xi$ and these points differ just at the $\xi$-th coordinate and are equal 
on all other coordinates of the product. It is clear that $R_\xi$ is always
a homeomorphic copy of $K_\xi$.  

\begin{proposition}\label{splitproperties} Let $\kappa$,  $M$, $K_\xi$s  and
 $\{r_\xi: \xi<\kappa\}$ be as in Definition \ref{splitdefinition}. Suppose
that $M$ and $K_\xi$s for $\xi<\kappa$ are moreover connected.
Let
$K\subseteq [0,1]^{\{\st\}}\times\Pi_{\xi<\kappa} K_\xi$ is a  split $M$ induced by 
$(f_\xi)_{\xi<\kappa}$. Then
\begin{enumerate} 
\item $K$ is a compact Hausdorff space,
\item $K$ is connected,
\item  $K$ is first countable and 
\[\{U^n_K: n\in \N\}\]
 forms a basis at $x_r$ for each $r\in M\setminus\{r_\xi:\xi<\kappa\}$,
where $(U^n: n\in \N)$ is a basis at $r$ in $M$.
and 
\[\{U^n_K\cap V^n_{K,\xi} : n\in \N\}\]
 forms a basis at $x_{\xi, t}$ 
for each $t\in [-1,1]$ and each $\xi<\kappa$,  where $(V^n: n\in \N)$ is a basis at $t$ in $K_\xi$.
\item $K^2$ has a discrete set of cardinality $\kappa$, if all $K_\xi$s have at least two points,
\item $C(K)$ has a biorthogonal system of cardinality $\kappa$.
\end{enumerate}
\end{proposition}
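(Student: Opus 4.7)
My plan is to address the five items in order, extracting each from the explicit coordinate description in Definition \ref{splitdefinition}.

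For (1), Hausdorffness is inherited from the product $M\times\prod_{\xi<\kappa}K_\xi$, and compactness follows once $K$ is shown closed there. Given $y$ in the product with $y\notin K$: if $y(\st)=r\notin\{r_\xi:\xi<\kappa\}$ the only candidate in $K$ with that $(\st)$-value is $x_r$, so $y$ differs from $x_r$ at some coordinate $\eta$ with $r_\eta\ne r$; if $y(\st)=r_\xi$ the candidates $\{x_{\xi,t}:t\in K_\xi\}$ all agree outside the $\xi$-coordinate, so $y$ differs from each at some common $\eta\ne\xi$, and again $r_\eta\ne y(\st)$. Continuity of $f_\eta$ at $y(\st)$ together with Hausdorffness of $K_\eta$ then supplies a product-basic neighborhood of $y$ on which the $\eta$-coordinate is separated from $f_\eta$ applied to the $(\st)$-projection; shrinking the $(\st)$-part, using Hausdorffness of $M$, to avoid the finitely many $r_{\xi_i}$ appearing in the other specified coordinates makes this neighborhood disjoint from $K$. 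For (2), I would use the projection $\pi:K\to M$, $\pi(x)=x(\st)$, a continuous surjection between compact Hausdorff spaces, hence closed; its fibers are either $\{x_r\}$ or $R_\xi\cong K_\xi$, connected by hypothesis. If $K=A\sqcup B$ were a non-trivial clopen decomposition, then $M\setminus\pi(B)$ and $M\setminus\pi(A)$ would form a non-trivial disjoint open cover of $M$ (disjoint by surjectivity, covering because each fiber, being connected, lies entirely in $A$ or $B$, open because $\pi$ is closed), contradicting connectedness of $M$.

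For (3), since basic opens in the product depend on finitely many coordinates, it suffices to approximate them. Given $W\ni x_r$ for $r\notin\{r_\xi:\xi<\kappa\}$ specified by $V^{(\st)}\ni r$ and $V^{(\xi_i)}\ni f_{\xi_i}(r)$ for $i\le k$, continuity of each $f_{\xi_i}$ at $r$ (valid since $r\ne r_{\xi_i}$) and Hausdorffness of $M$ yield some basic $U^n$ at $r$ contained in $V^{(\st)}\cap\bigcap_i f_{\xi_i}^{-1}(V^{(\xi_i)})$ and avoiding $\{r_{\xi_1},\ldots,r_{\xi_k}\}$; a coordinate-by-coordinate check on each type of point in $K$ then shows $(U^n)_K\subseteq W$. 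The analysis for $x_{\xi,t}$ is parallel, with the extra constraint $x(\xi)\in V^n$ handling any $x_{\zeta,s}\in(U^n)_K\cap V^n_{K,\xi}$ with $\zeta\ne\xi$, because in that case $x(\xi)=f_\xi(r_\zeta)\in V^n$ automatically.

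For (4) and (5), the crucial observation is that whenever $\eta\ne\xi$, the $\xi$-coordinate of $x_{\eta,s}$ equals $f_\xi(r_\eta)$ independently of $s$. I fix, for each $\xi$, distinct $t^1_\xi,t^2_\xi\in K_\xi$, disjoint open $W_1\ni t^1_\xi$ and $W_2\ni t^2_\xi$, and a Urysohn function $h_\xi:K_\xi\to[0,1]$ with $h_\xi(t^1_\xi)=1$ and $h_\xi(t^2_\xi)=0$. The pairs $(x_{\xi,t^1_\xi},x_{\xi,t^2_\xi})\in K^2$ for $\xi<\kappa$ are then discrete of cardinality $\kappa$: the neighborhood $(W_1)_{K,\xi}\times(W_2)_{K,\xi}$ contains the $\xi$-th pair but no other $\eta$-th pair, since both entries of the latter share the $\xi$-coordinate $f_\xi(r_\eta)$, which cannot simultaneously lie in the disjoint sets $W_1$ and $W_2$. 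For the biorthogonal system, I define $g_\xi(x):=h_\xi(x(\xi))\in C(K)$ and $\mu_\xi:=\delta_{x_{\xi,t^1_\xi}}-\delta_{x_{\xi,t^2_\xi}}\in C(K)^*$. Then $\mu_\xi(g_\xi)=1$, while for $\eta\ne\xi$ the points $x_{\xi,t^1_\xi}$ and $x_{\xi,t^2_\xi}$ share $\eta$-coordinate $f_\eta(r_\xi)$, so $g_\eta$ agrees on them and $\mu_\xi(g_\eta)=0$.

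The main bookkeeping burden is in (1) and (3), where the interactions among the finitely many coordinates of a product basic open must be tracked carefully because the $f_\xi$ are undefined at $r_\xi$; the unifying device is the freedom to shrink the $(\st)$-neighborhood simultaneously to satisfy all relevant continuity constraints and to avoid the finitely many $r_{\xi_i}$ occurring in the specified coordinates.
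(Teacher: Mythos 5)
Your proof is correct. The paper gives no argument of its own here (it simply cites Proposition 2.3 of \cite{equilateral}), and your coordinate-by-coordinate verification --- closedness in the product for (1), the closed surjection $x\mapsto x(\st)$ onto $M$ with connected fibres for (2), approximation of product-basic neighbourhoods by the sets $U^n_K$ and $U^n_K\cap V^n_{K,\xi}$ for (3), and the observation that $x_{\eta,s}(\xi)=f_\xi(r_\eta)$ is independent of $s$ for (4) and (5) --- is exactly the standard argument that reference stands for; note only that your construction for (5) also uses that each $K_\xi$ has at least two points, a hypothesis the proposition states explicitly only in item (4) but which is clearly intended in (5) as well.
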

\begin{proof}
Like in Proposition 2.3 of \cite{equilateral}.
\end{proof}

\section{ZFC examples}

The following lemma is  due to K. Ciesielski and R. Pol (Remark 7.1 of \cite{cpol})
we provide its proof for the convenience of the reader.

\begin{lemma}\label{cpol} There is a collection $\{(s^\xi_n)_{n\in \N}:  \xi<2^\omega\}$ of
sequences of  the reals from $[0,1]$
and a collection $\{r_\xi: \xi<2^\omega\}$ of (distinct)  reals from $[0,1]$
such that:
\begin{enumerate}
\item for each $\xi<2^\omega$ the sequence $s_n^\xi$ converges to $r_\xi$,
\item  for every uncountable $X\subseteq [0,1]$ there is $\xi<2^\omega$ such that
$\{s_n^\xi: n\in \N\}\subseteq X$.
\end{enumerate}
\end{lemma}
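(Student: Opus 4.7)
The plan is to build the desired collection by a transfinite recursion of length $\cc$, where the recursion is indexed not by arbitrary uncountable subsets of $[0,1]$ (of which there are $2^{\cc}$) but by all \emph{countable dense-in-itself} subsets of $[0,1]$, of which there are only $\cc$. The key point is an auxiliary claim reducing (2) to this smaller family: every uncountable $X\subseteq [0,1]$ contains a countable dense-in-itself subset.

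To prove the claim, let $X^{*}$ be the set of condensation points of $X$ lying in $X$. Using a countable base of rational intervals for $[0,1]$ one shows in the usual way that $X\setminus X^{*}$ is countable, so $X^{*}$ is uncountable; moreover every point of $X^{*}$ is a limit point of $X^{*}$, so $X^{*}$ is dense-in-itself. Any countable $G\subseteq X^{*}$ that is dense in $X^{*}$ (in the subspace topology) is then also dense-in-itself: for $a\in G$ and a neighbourhood $U$ of $a$, pick $y\in U\cap X^{*}\setminus\{a\}$, then a smaller neighbourhood $V\subseteq U$ of $y$ missing $a$, and use density of $G$ in $X^{*}$ to find a point of $G$ in $V$. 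So $G\subseteq X^{*}\subseteq X$ is the desired set.

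Next I would observe that there are exactly $\cc$ countable dense-in-itself subsets of $[0,1]$ (at most $|[0,1]|^{\aleph_{0}}=\cc$, and at least $\cc$, e.g.\ the sets $(\Q\cap(0,1))\cup\{t\}$ for irrational $t\in(0,1)$) and enumerate them as $\{G_\xi:\xi<2^\omega\}$. For each $\xi$ the closure $\overline{G_\xi}$ is closed, nonempty, and has no isolated points (an isolated point of $\overline{G_\xi}$ would belong to $G_\xi$ and be isolated there, contradicting dense-in-itselfness), hence $\overline{G_\xi}$ is a perfect subset of $[0,1]$ and in particular $|\overline{G_\xi}|=\cc$. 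By transfinite recursion on $\xi<2^\omega$, choose
\[
r_\xi\in \overline{G_\xi}\setminus\{r_\eta:\eta<\xi\},
\]
which is possible because $|\overline{G_\xi}|=\cc>|\xi|$; then, since $r_\xi\in\overline{G_\xi}$, pick a sequence $(s^\xi_n)_{n\in\N}$ of elements of $G_\xi$ with $s^\xi_n\to r_\xi$.

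Properties (1) and the distinctness of the $r_\xi$ are immediate from the construction. For (2), given any uncountable $X\subseteq[0,1]$, the claim furnishes a countable dense-in-itself $G\subseteq X$; since the enumeration is exhaustive we have $G=G_\xi$ for some $\xi<2^\omega$, and then $\{s^\xi_n:n\in\N\}\subseteq G_\xi\subseteq X$, as required. The only nontrivial step is the auxiliary claim, which is a standard Cantor--Bendixson-style condensation argument; everything else is bookkeeping, and the uniformly available perfect sets $\overline{G_\xi}$ of size $\cc$ give ample room to keep the $r_\xi$ distinct.
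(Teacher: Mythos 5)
Your proof is correct and follows essentially the same strategy as the paper's: enumerate a family of $\mathfrak{c}$ many countable subsets of $[0,1]$ with perfect (hence size-$\mathfrak{c}$) closures, recursively pick $r_\xi$ in the closure of the $\xi$-th set avoiding earlier choices, and select a sequence from that set converging to $r_\xi$. The only (harmless) variation is that you index by countable dense-in-itself sets and reduce via a Cantor--Bendixson condensation argument, whereas the paper indexes by countable sets with uncountable closure and reduces simply by taking a countable dense subset of the given uncountable $X$.
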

\begin{proof}
Enumerate all countable subsets of $[0,1]$ with uncountable closures as $(A_\xi:\xi<2^\omega)$.
Construct $s_n^\xi$ and  $r_\xi$ by recursion on $\xi<2^\omega$. 
Suppose that we are done till $\xi<2^\omega$. As the closure of $A_\xi$ is uncountable,
as a closed subset of $[0,1]$ it must contain a copy of a Cantor set, i.e., its closure  has
cardinality $2^\omega$. So choose $r_\xi$ in the closure of $A_\xi$ distinct than
all $r_\eta$ for $\eta<\xi$, then choose $s_n^\xi\in A_\xi$ which converges to $r_\xi$.
Given any uncountable set $X\subseteq [0,1]$, its closure is uncountable, and there
is a dense countable subset $A$ of $X$. It follows that $A=A_\xi$ for some $\xi<2^\omega$
and so $\{s_n^\xi: n\in \N\}\subseteq X$.
\end{proof}

\begin{theorem}\label{zfcexample} There is a compact nonmetrizable space where all
totally disconnected subspaces are metrizable. There are such spaces which have
subspaces with continuous nonmetrizable totally disconnected images.
\end{theorem}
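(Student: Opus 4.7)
The plan is to build the space as a split $[0,1]$ in the sense of Definition \ref{splitdefinition}, tuned to the combinatorics of Lemma \ref{cpol}. Take $M=[0,1]$, $\kappa=2^\omega$, and $K_\xi=[0,1]$ for every $\xi<2^\omega$, and let $\{r_\xi\}$ together with sequences $s^\xi_n\to r_\xi$ (with $s^\xi_n\neq r_\xi$) be the data from Lemma \ref{cpol}. Fix an enumeration $\{q_n:n\in\N\}$ of a countable dense subset of $[0,1]$ and construct continuous $f_\xi:[0,1]\setminus\{r_\xi\}\to[0,1]$ by piecewise-linear interpolation with $f_\xi(s^\xi_n)=q_n$ for every $n$. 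Then a tail of $\{q_n\}$ lies in $f_\xi[U\setminus\{r_\xi\}]$ for every neighbourhood $U$ of $r_\xi$, so the density condition of Definition \ref{splitdefinition} holds automatically. Let $K$ be the resulting split $[0,1]$. Proposition \ref{splitproperties} then gives that $K$ is compact and first countable, the fibres $R_\xi\cong K_\xi=[0,1]$ are connected continua, and $C(K)$ contains a biorthogonal system of size $2^\omega$; the last of these rules out metrizability of $K$.

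The main claim is that every closed totally disconnected $L\subseteq K$ is metrizable, and the key step is to show that $T:=\pi[L]$ is countable, where $\pi:K\to[0,1]$ is the projection to the $\st$-coordinate. Suppose not. Lemma \ref{cpol} yields $\eta<2^\omega$ with $\{s^\eta_n:n\in\N\}\subseteq T$; pick $y_n\in L$ with $y_n(\st)=s^\eta_n$ for each $n$. Since $s^\eta_n\neq r_\eta$ and the $r_\zeta$ are distinct, either case of Definition \ref{splitdefinition} applied to $y_n$ gives
\[
y_n(\eta)=f_\eta(s^\eta_n)=q_n.
\]
For an arbitrary $t\in[0,1]$, select $n_k$ with $q_{n_k}\to t$ and extract a convergent sub-subsequence of $(y_{n_k})$ using compactness of $L$; its limit $y\in L$ has $y(\st)=r_\eta$ and $y(\eta)=t$, hence $y=x_{\eta,t}$. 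Thus $R_\eta\subseteq L$, contradicting total disconnectedness of $L$ because $R_\eta\cong[0,1]$ is a connected continuum. This compactness-plus-density step, where the countable dense trace $\{q_n\}$ forces the entire fibre $R_\eta$ into $L$, is both the main use of Lemma \ref{cpol} and the chief obstacle in the argument.

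Once $T$ is countable the rest is routine: the restrictions to $L$ of $\pi$ and of the coordinate projections $\pi_\xi:K\to K_\xi=[0,1]$ for the countably many $\xi$ with $r_\xi\in T$, composed with countable bases of $C([0,1])$, give a countable point-separating family of continuous real-valued functions on $L$. Indeed, two distinct points of $L$ sharing an $\st$-coordinate $r$ must have $r=r_\xi\in T$ for some $\xi$, since all other fibres $\pi^{-1}(r)$ are singletons $\{x_r\}$; then both points lie in $R_\xi$ and are distinguished at coordinate $\xi$. The compact Hausdorff space $L$ thus admits a countable point-separating family of continuous real functions and is metrizable.

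For the second assertion, I would modify the choice of $(f_\xi)$ to additionally encode one-sided monotone cluster values at each $r_\xi$ — say $0$ from the left and $1$ from the right along auxiliary subsequences, compatibly with the density requirement — so that the classical double arrow appears as a continuous image of a suitable closed subspace of $K$. The Ciesielski--Pol dichotomy above is insensitive to this extra structure, so the property that every closed totally disconnected subspace of $K$ is metrizable persists.
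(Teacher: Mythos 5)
Your argument for the first assertion is correct and is essentially the paper's own proof. You build the same unordered split interval from the Ciesielski--Pol data of Lemma \ref{cpol}, arranging that $f_\xi$ takes a dense set of values on $\{s^\xi_n: n\in\N\}$; the paper asks that every rational in the target interval be attained infinitely often on that set and then verifies $R_\xi\subseteq\overline{\{y_n:n\in\N\}}$ directly from the basic neighbourhoods of Proposition \ref{splitproperties}(3), while you attain each $q_n$ once and pass to convergent subsequences, but this is the same argument. Your reduction to ``$\pi[L]$ is uncountable'' is also the paper's first step, stated contrapositively.

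The second assertion is where your proposal has a genuine gap. You propose to realize the double arrow as a continuous image of a closed subspace of $K$ by giving each $f_\xi$ cluster values $0$ and $1$ along auxiliary sequences approaching $r_\xi$. But a closed subspace witnessing the claim must consist of points whose coordinates on some uncountable index set $A$ all lie in a fixed totally disconnected set such as $\{0,1\}$ (otherwise the projected image need not be totally disconnected), and it must be nonconstant on uncountably many of those coordinates to have a nonmetrizable image. Your auxiliary sequences control $f_\xi$ only on countably many points per $\xi$, and a point $x_r$ belongs to such a subspace only if $f_\xi(r)\in\{0,1\}$ simultaneously for \emph{all} $\xi\in A$ --- a condition that nothing in your construction arranges for uncountably many $r$, and that taking closures does not help with. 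The paper uses a different device: fix an uncountable closed nowhere dense $F\subseteq[0,1]$ with no isolated points and a Bernstein set $B$, run the Ciesielski--Pol construction choosing all its $r_\xi$ inside $B$, and additionally split at every point of $F\setminus B$ using splitting functions that are \emph{constantly} $0$ on $F$ (possible since $F$ is nowhere dense, so the density requirement of Definition \ref{splitdefinition} can still be met off $F$). Then $K'=\{x\in K: x(*)\in F,\ x(\xi)\in\{0,1\}\hbox{ whenever }r_\xi\in F\setminus B\}$ is closed, and its projection to the coordinates indexed by $F\setminus B$ lands in $\{0,1\}^{F\setminus B}$ and is nonmetrizable, while the Ciesielski--Pol reflection argument (whose relevant $r_\xi$ all lie in $B$) is unaffected. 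Some such mechanism --- an uncountable closed set on which uncountably many splitting functions take only finitely many values --- is what your sketch is missing.
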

\begin{proof} Fix $\{(s^\xi_n)_{n\in \N}:  \xi<2^\omega\}$ and 
 $\{r_\xi: \xi<2^\omega\}$ as in Lemma \ref{cpol}. Define a split interval $K$ by
defining a splitting function
$f_\xi: [0,1]\setminus \{r_\xi\}\rightarrow [-1,1]$ in such a way that
each rational number in $[-1, 1]$ is assumed on the set $\{s_n^\xi: n\in \N\}$
infinitely many times.  Let $K\subseteq [0,1]^{\{*\}}\times[-1,1]^{2^\omega}$ be 
the (unordered) split interval induced by $(f_\xi)_{\xi<2^\omega}$.
We will show that no nonmetrizable closed subspace of $K$ is  totally disconnected.

Let $L\subseteq K$ be  nonmetrizable and compact. 
First note that $X=\{r\in [0,1]: r=x(*), x\in L\}$ must be uncountable.
Indeed, otherwise  there is a countable $A\subseteq 2^\omega$ such that
$L$ is a subset of
$$Y=\{x_{\xi,t}: \xi\in A, t\in [-1,1]\}\cup \{x_r: r\in X\setminus \{r_\xi: \xi<2^\omega\}\}.$$
Since the coordinates from $\{*\}\cup A$ separate the points of $Y$
by Proposition  \ref{splitproperties} (3), they separate the points of $L$, and so
$L$ is metrizable, a contradiction.

Now, by  Lemma \ref{cpol} there is $\xi<2^\omega$ 
such that $\{s_\xi^n: n\in \N\}\subseteq X$ and $s_n^\xi$ converges to $r_\xi$.
If $s_n^\xi=r_\eta$ for some $\eta<2^\omega$, then there is $t\in [-1,1]$
such that $x_{\eta, t}\in L$, in this case put $y_n=x_{\eta, t}$. 
If $s_n^\xi\in [0,1]\setminus\{r_\eta: \eta<2^\omega\}$, then put
$y_n=x_{s_n^\xi}$. In any case we have $y_n\in L$ and $y_n(*)=s_n^\xi$
for every $n\in \N$. 

We will show that $R_\xi\subseteq \overline{\{y_n: n\in \N\}}$ which will complete the proof
of the theorem as $\{y_n: n\in \N\}\subseteq L$ and $R_\xi$ is a  homeomorphic copy of 
$[-1,1]$ and so connected.  Take $x_{\xi, u}\in R_\xi$ for some
$u\in (a, b)\subseteq [-1,1]$ for $-1\leq a<b\leq 1$ and consider $(a, b)_{K,\xi}\cap
(r_\xi-1/k, r_\xi+1/k)_K$  for some $k\in \N$. By the construction of the splitting functions there is $n\in \N$
such that $s_n^\xi\in (r_\xi-1/k, r_\xi+1/k)$ and $f_\xi(s_n^\xi)\in (a,b)$, hence
$y_n\in (a, b)_{K,\xi}$ and $y_n\in (r_\xi-1/k, r_\xi+1/k)_K$. So
$x_{\xi, u}\in \overline{\{y_n: n\in \N\}}$ by Proposition \ref{splitproperties} (3). 
Since $x_{\xi, u}\in R_\xi$ was arbitrary
we obtain that $R_\xi\subseteq L$, and so $L$ is not totally disconnected.

To obtain a version which has a subspace with a nonmetrizable totally disconnected
continuous image note that in the Lemma \ref{cpol} we may pick all $\{r_\xi: \xi<2^\omega\}$
from a fixed Bernstein set $B\subseteq \R$ , that  is a set  such that both
$B$ and $\R\setminus B$ intersect  every closed uncountable
subset of the reals on a set of cardinality continuum. Fix an 
uncountable nowhere dense closed $F\subseteq [0,1]$ with no isolated points.
If we choose $r_\xi$s as in the above construction only from $B$,
for $r\in F\setminus B$ we are free to choose $f_\xi: [0,1]\setminus\{r\}\rightarrow [-1,1]$.
So do it in such a way that $f_\xi[F]=\{0\}$. Consider
$$K'=\{x\in K: x(*)\in F,   (r_\xi\in F\setminus B\ \ \Rightarrow\ x(\xi)\in \{0,1\})\}$$ 
and $\phi: K\rightarrow [-1,1]^{F\setminus B}$ given by $\phi(x)(t)=x(\xi)$
for $x\in K$ and $t=r_\xi\in F\setminus B$.  We note that $K'$ is closed, $\phi[K']$ is
totally disconnected and $\phi(x_{\xi,i})(t)=0$ if $t\not=\xi$, $i=0,1$ and $\phi(x_{\xi,i})(t)=1$
if $t=\xi$, $i=1$, so $\phi[K']$  cannot have a countable family of continuous functions
which separate the points, and so is nonmetrizable.

\end{proof}

Slightly modifying the above construction we can obtain:

\begin{theorem} Let $n\in \N\cup\{\infty\}$. There is a nonmetrizable compact space where all
nonmetrizable compact subspaces are of dimension $n$.
\end{theorem}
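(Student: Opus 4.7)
The plan is to repeat the construction of Theorem \ref{zfcexample} with two modifications designed to pin down the dimension at exactly $n$: replace the base space $[0,1]$ by a zero-dimensional compact metrizable space without isolated points, concretely $M=2^\omega$, and replace the splitting target $[-1,1]$ by a fixed compact metric space $K_\xi$ of dimension $n$, say $K_\xi=[0,1]^n$ for $n\in\N$ and $K_\xi=[0,1]^\omega$ (the Hilbert cube) for $n=\infty$.

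First I would observe that Lemma \ref{cpol} holds verbatim with $[0,1]$ replaced by $M=2^\omega$, since its proof only uses that every uncountable closed subset of an uncountable Polish space has cardinality $2^\omega$. This produces sequences $(s^\xi_m)_m$ converging to distinct points $r_\xi\in M$ and satisfying condition~(2) of the lemma. For each $\xi<2^\omega$ I would then build a continuous splitting function $f_\xi\colon M\setminus\{r_\xi\}\to K_\xi$ such that $f_\xi[U\setminus\{r_\xi\}]$ is dense in $K_\xi$ for every neighbourhood $U$ of $r_\xi$ and such that $f_\xi(s^\xi_m)$ hits each element of a fixed countable dense set $D_\xi\subseteq K_\xi$ infinitely often. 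Zero-dimensionality of $M$ makes this routine: partition $M\setminus\{r_\xi\}$ into countably many clopen pieces shrinking to $r_\xi$ and arranged so that each $s^\xi_m$ lies in its own piece, then define $f_\xi$ piecewise constantly, choosing the constants to realise both conditions. Let $K\subseteq M^{\{*\}}\times\Pi_{\xi<2^\omega}K_\xi$ be the split $M$ induced by $(f_\xi)_{\xi<2^\omega}$.

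The lower bound $\dim(L)\geq n$ for a nonmetrizable compact $L\subseteq K$ is then immediate from the argument of Theorem \ref{zfcexample}: the set $X=\{x(*):x\in L\}\subseteq M$ must be uncountable (otherwise countably many product coordinates would separate the points of $L$, forcing $L$ to be metrizable), Lemma \ref{cpol} supplies a $\xi$ with $\{s^\xi_m:m\in\N\}\subseteq X$, and the density of $\{f_\xi(s^\xi_m):m\in\N\}$ in $K_\xi$ combined with the first-countability of $K$ (Proposition \ref{splitproperties}(3), whose proof does not actually require $M$ or the $K_\xi$ to be connected) yields $R_\xi\subseteq\overline{\{y_m:m\in\N\}}\subseteq L$. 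Since $R_\xi$ is homeomorphic to $K_\xi$, this gives $\dim(L)\geq n$.

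The matching upper bound is the genuinely new ingredient and where the switch to a zero-dimensional base pays off. The projection $\pi\colon K\to M$, $\pi(x)=x(*)$, is a closed continuous surjection whose fibres are either singletons or copies of some $K_\xi$ and therefore have dimension at most $n$; since $\dim(M)=0$, Hurewicz's theorem on dimension-raising maps gives $\dim(K)\leq n$, and monotonicity of covering dimension for closed subspaces of compact Hausdorff spaces yields $\dim(L)\leq n$. When $n=\infty$ the upper bound is vacuous and the conclusion is read as $L$ being infinite-dimensional, already witnessed by the copy of the Hilbert cube inside. The only genuinely technical step is the explicit construction of the $f_\xi$ with the combined density-plus-interpolation property; everything else is bookkeeping around the Theorem \ref{zfcexample} argument and a citation of Hurewicz's inequality.
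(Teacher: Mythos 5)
Your proposal is correct, and for the half of the claim that the paper actually proves it follows the same route: Lemma \ref{cpol} supplies the $r_\xi$ and the sequences $s^\xi_m$, the splitting functions are arranged to take each value from a fixed countable dense subset of the fibre infinitely often along $\{s^\xi_m: m\in\N\}$, and the argument of Theorem \ref{zfcexample} then shows that every nonmetrizable compact $L\subseteq K$ contains some $R_\xi$, hence a copy of $[0,1]^n$. Where you genuinely diverge is in taking the phrase ``of dimension $n$'' literally and supplying the upper bound $\dim L\le n$, which the paper's two-line proof does not address: the paper keeps the base $M=[0,1]$ and stops at the embedded cube. Your two modifications --- a zero-dimensional base $M=2^\omega$ (which also makes the piecewise-constant construction of the $f_\xi$ painless) and the Hurewicz inequality applied to the projection $\pi(x)=x(\st)$ --- are exactly what is needed for that, and your remarks that Lemma \ref{cpol} and Proposition \ref{splitproperties}(3) survive the loss of connectedness are accurate. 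Two cautions. First, the theorem you want is the Hurewicz theorem on \emph{dimension-lowering} mappings (dimension of the domain bounded by dimension of the range plus dimension of the fibres), not the dimension-raising one; and since $K$ is nonmetrizable you must quote the version for closed mappings onto paracompact spaces rather than the classical separable metric statement --- that version does hold for compacta, so this is only a matter of citing correctly. Second, the base change is a convenience rather than a necessity: the projection of a split $M$ onto $M$ is a fully closed map in Fedorchuk's sense, and the dimension inequality for fully closed maps of compacta (see \cite{fedorchuksur}) already yields $\dim K\le n$ for the paper's own construction over $[0,1]$ when $n\ge 1$, whereas the bare Hurewicz formula over a one-dimensional base would only give $n+1$.
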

\begin{proof}
Fixing $n\in \N\cup\{\infty\}$ consider a split interval $K$ 
induced by splitting functions $f_\xi: [0,1]\setminus \{r_\xi\}\rightarrow [0,1]^n$ where by $[0,1]^\infty$ 
we mean $[0,1]^\N$.  Use Lemma \ref{cpol} as in Theorem
\ref{zfcexample} to define  $f_\xi$ so that 
each point from a fixed countable dense subset of $[0,1]^n$ is assumed on the set $\{s_n^\xi: n\in \N\}$
infinitely many times. The same argument as in the proof of Theorem
\ref{zfcexample} shows that any nonmetrizable subspace of $K$ includes some
$R_\xi$ which is homeomorphic to $[0,1]^n$.
\end{proof}

\section{Totally disconnected nonreflection in all continuous images}

We will be often using the following lemma without mentioning it:

\begin{lemma}\label{subquotient}
Let $K$ be a compact space and $\mathcal K$ the smallest class of compact
spaces containing $K$ which is closed under taking subspaces and under taking continuous images.
Then $\mathcal K$ is equal to the class of all subspaces of all continuous images of $K$
and it is equal to the class of all continuous images of all subspaces of $K$.
\end{lemma}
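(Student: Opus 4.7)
Write $\mathcal{K}_1$ for the class of (closed) subspaces of continuous images of $K$ and $\mathcal{K}_2$ for the class of continuous images of (closed) subspaces of $K$. Both are trivially contained in $\mathcal{K}$, since $\mathcal{K}$ contains $K$ and is closed under the two operations. The plan is to prove $\mathcal{K}_1 = \mathcal{K}_2$, and then observe that this common class contains $K$ and is stable under both operations, forcing it to coincide with $\mathcal{K}$.

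The easy half is $\mathcal{K}_1 \subseteq \mathcal{K}_2$: given $M \subseteq \phi(K)$ with $\phi : K \to N$ continuous, the compactness of $M$ inside the Hausdorff space $N$ makes $M$ closed in $N$, hence $\phi^{-1}(M)$ is a closed, compact subspace of $K$ which the restriction $\phi|_{\phi^{-1}(M)}$ maps onto $M$.

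The nontrivial half is $\mathcal{K}_2 \subseteq \mathcal{K}_1$, and this is where the real content of the lemma lies. Given a closed $L \subseteq K$ and a continuous surjection $\psi : L \to M$, I would embed $M$ into a cube $[0,1]^\kappa$ via the Tychonoff embedding theorem for compact Hausdorff spaces, view the resulting composition as a family of continuous $[0,1]$-valued functions on the closed subset $L$ of the normal space $K$, and apply the Tietze extension theorem coordinatewise to extend each of them to all of $K$. Assembling these extensions yields a continuous map $g : K \to [0,1]^\kappa$ whose restriction to $L$ coincides with the embedding of $M$. Then $g(K)$ is a compact continuous image of $K$ which contains $g(L)$, a homeomorphic copy of $M$, as a closed subspace; this gives $M \in \mathcal{K}_1$.

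With the equality $\mathcal{K}_1 = \mathcal{K}_2$ in hand, closure of this common class under both operations becomes immediate: a subspace of a subspace is a subspace (closure of $\mathcal{K}_1$ under passing to subspaces), while a composition of continuous surjections is again a continuous surjection (closure of $\mathcal{K}_2$ under taking continuous images). So this class contains $K$ and is closed under both operations, hence contains $\mathcal{K}$, which finishes the proof. The only delicate ingredient is the Tietze extension step, but it is routine since closed subsets of compact Hausdorff spaces are $C$-embedded.
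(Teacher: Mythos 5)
Your proof is correct and follows essentially the same route as the paper: the preimage argument for one inclusion and the Tychonoff-embedding-plus-coordinatewise-Tietze extension for the other are exactly the paper's two steps, and the concluding observation that each of the two classes is closed under one of the operations matches the paper's opening remark.
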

\begin{proof}
It is enough to show that both of the latter classes are equal since the first one
is closed under taking subspaces and the second one is closed under taking
continuous images.
If $L$ is a subspace of $K'$ and $\phi: K\rightarrow K'$ is a continuous surjection,
then $\phi^{-1}[L]\subseteq K$ is a subspace of $K$ which maps onto $L$.
If $\phi:K'\rightarrow L$ is a continuous surjection and $K'$ is a subspace
of $K$ consider $L$ as a subspace of $[0,1]^\kappa$ for some cardinal $\kappa$.
By applying the Tietze theorem to the compositions $\pi_\alpha\circ\phi$, where
$\pi_\alpha$ is the projection from $[0,1]^\kappa$ onto its $\alpha$-th coordinate for
$\alpha<\kappa$ we obtain an extension $\psi: K\rightarrow \psi[K]\subseteq[0,1]^\kappa$
such that $L$ is a subspace of its image $\psi[K]$.
\end{proof}

\begin{proposition}\label{properties}
If $K$ is a compact nonmetrizable  space such that all totally disconnected
subspaces of all continuous images of $K$ are metrizable, then 
\begin{enumerate}
\item $K$ has no uncountable discrete subspace,
\item $K$ is not Eberlein compact,
\item $K$ is not Rosenthal compact.
\end{enumerate}
However, it is consistent that there are such spaces
which are:
\begin{enumerate}
\setcounter{enumi}{3}
\item  not hereditarily Lindel\"of, or
\item not hereditarily separable,  or
\item Corson compact.
\end{enumerate}
Assuming Martin's axiom and the negation of the continuum hypothesis
whenever $K$ is as above, then
\begin{enumerate}
\setcounter{enumi}{6}
\item $K$ does not carry a measure of uncountable type,
\item $K$ is hereditarily separable,
\item $K$ is hereditarily Lindel\"of,
\item $K$ is not Corson compact.
\end{enumerate}
\end{proposition}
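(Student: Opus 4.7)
The plan is to derive (1)--(3) in ZFC from the single observation that an uncountable discrete subspace of $K$ produces a totally disconnected nonmetrizable closed subspace in a continuous image of $K$; to read off the consistency statements (4)--(6) from examples in the literature or constructed later in this paper; and to derive (7)--(10) by combining that observation with known structural theorems valid under MA+$\neg$CH.

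For (1), assume that $D\subseteq K$ is discrete of size $\omega_1$. By normality of $K$, choose Urysohn functions $f_d\colon K\to [0,1]$ with $f_d(d)=1$ and $f_d[D\setminus\{d\}]=\{0\}$, and form the diagonal map $f\colon K\to [0,1]^D$. A direct check in the product topology shows that the closure of $f[D]$ in $f[K]$ equals $\{e_d : d\in D\}\cup\{\mathbf{0}\}$, the one-point compactification of the uncountable discrete space $D$; this is a totally disconnected nonmetrizable closed subspace of a continuous image of $K$, contradicting the hypothesis. Items (2) and (3) reduce to (1) via the classical theorems that every nonmetrizable Eberlein compact contains an uncountable discrete subspace (Benyamini--Rudin--Wage, Rosenthal) and that every nonmetrizable Rosenthal compact either contains an uncountable discrete subspace or a nontrivial convergent $\omega_1$-sequence (Todorcevic); in the latter case the range of the sequence together with its limit is a totally disconnected nonmetrizable closed subspace, so (1) applies either way.

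The consistency items (4)--(6) I would establish by pointing to existing examples. Fedorchuk's CH example satisfies the hypothesis of the proposition and is neither hereditarily Lindel\"of nor hereditarily separable, accounting for (4) and (5). A Corson compact witness for (6) can be produced under CH by a small-Corson construction of the Kunen--Todorcevic type, or by arranging the split-compact construction so as to land in a $\Sigma$-product of metrizable spaces.

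The bulk of the work lies in the MA+$\neg$CH block. For (7), if $K$ carried a measure of uncountable type, the Haydon--Kunen theorem would provide a continuous surjection from a closed subspace of $K$ onto $\{0,1\}^{\omega_1}$, contradicting the hypothesis. For (8) and (9), part (1) shows that $K$ has countable spread, and Szentmikl\'ossy's theorem together with its hereditary-separability counterpart asserts that under MA+$\neg$CH every compact space of countable spread is both hereditarily Lindel\"of and hereditarily separable. For (10), if $K$ were Corson compact, then (8) would give hereditary separability, hence the ccc property, whence by the Argyros--Mercourakis--Negrepontis theorem $K$ would be Eberlein compact under MA+$\neg$CH, contradicting (2). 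The main obstacle of the plan is to quote each of these theorems in the precise form needed and, in the last step, to interpret the hypothesis on $K$ (which refers to continuous images of $K$, not only $K$ itself) correctly throughout.
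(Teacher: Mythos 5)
Items (1), (2) and the MA$+\neg$CH block (7)--(10) of your proposal are essentially sound and close in spirit to the paper's argument: the paper also reduces (2), (8), (9), (10) to the discrete-subspace observation (1), quoting Rosenthal's non-c.c.c.\ result for Eberlein compacta, Szentmikl\'ossy \cite{szentmiklossy} together with Balogh for (8)--(9), Fremlin \cite{fremlin} (rather than Haydon--Kunen) for (7), and Marciszewski \cite{marciszewski} for (10); your alternative routes via countable spread and via c.c.c.\ Corson compacta being Eberlein are legitimate variants. The genuine problems are in (3) and in the consistency block (4)--(6). For (3) you misquote Todorcevic's dichotomy: Theorem 4 of \cite{stevobaire} says that a nonmetrizable Rosenthal compactum contains an uncountable discrete subspace \emph{or a copy of the (classical) split interval}, not ``a nontrivial convergent $\omega_1$-sequence.'' The split interval alternative finishes the proof instantly (it is totally disconnected and nonmetrizable), whereas your substitute alternative does not: the range of a convergent $\omega_1$-sequence together with its limit need not be closed, compact, or totally disconnected, so the step ``in the latter case \dots is a totally disconnected nonmetrizable closed subspace'' is unjustified.

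For (4)--(6) the gap is more serious and is precisely the subtlety the paper warns about: the hypothesis of the proposition quantifies over \emph{all continuous images} of $K$ (equivalently, all continuous images of subspaces, Lemma \ref{subquotient}), not merely over subspaces. Fedorchuk's space \cite{fedorchuk} is only known to have no infinite zero-dimensional \emph{closed subspaces}; you give no argument that none of its continuous images contains a nonmetrizable totally disconnected compactum, nor do you verify that it fails to be hereditarily Lindel\"of, respectively hereditarily separable (one example cannot witness both (4) and (5) simultaneously with (8)--(9) in mind, but each claim still needs its own verification). The paper instead builds a connected Ostaszewski-type space from $\diamondsuit$ for (4), explicitly modifying the construction so that every nonmetrizable closed subspace contains a large connected ``tail'' that must be collapsed by any map onto a totally disconnected space, and for (5) uses the Souslin line with a Kelley-type argument showing that any totally disconnected continuous image of a subspace factors through a separable, hence metrizable, nowhere dense set. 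For (6) your suggestion is too vague to count as a proof: split compact spaces are generally not Corson (the points $x_r$ have uncountable support in the product), and no concrete Kunen--Todorcevic construction is specified or checked against the continuous-image condition. The paper obtains (6) by applying Shapirovskii's theorem \cite{shapirovskii} to map the Souslin line irreducibly onto a nonmetrizable Corson compactum and then invoking Lemma \ref{subquotient}. As written, (3)--(6) of your proposal do not constitute a proof.
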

\begin{proof}
(1) Suppose that $K$ has 
an uncountable
discrete subspace $\{x_\alpha: \alpha<\omega_1\}$. 
Denote by $F_\alpha=\{x_\alpha\}$ and $G_\alpha=\overline{\{x_\beta: \beta\not=\alpha\}}$
 and consider continuous
functions $f_\alpha: K\rightarrow [0,1]$ such that $f_\alpha|F_\alpha=0$ and $f_\alpha|G_\alpha=1$
for all $\alpha<\omega_1$.
Define $\phi: K\rightarrow [0,1]^{\omega_1}$ by $\phi(x)(\alpha)=f_\alpha(x)$ for
every $\alpha<\omega_1$ and every $x\in K$.
Note that $\{0,1\}^{\omega_1}\cap \phi[K]$ is nonmetrizble, totally disconnected and compact.

(2) To conclude that $K$ cannot be an Eberlein compact recall that 
nonmetrizable Eberlein compact spaces are not c.c.c (Corollary 4.6. \cite{rosenthalacta}) and
use (1).

(3) To conclude that $K$ cannot be a Rosenthal  compact recall that nonmetrizable Rosenthal compacta
contain either a copy of the split interval (the classical one) which is totally
disconnected or an uncountable discrete
subset (Theorem 4 of \cite{stevobaire}).

(4) To see that there  are consistently $K$s as above which are not hereditarily Lindel\"of
one may consider the one
point compactification $K$ of a version of
the nonmetrizable manifold obtained by M. E. Rudin and P. Zenor in \cite{zenor} from $\diamondsuit$.
To take care of continuous images of the subspaces, one needs to 
modify however, the construction so that, for example, the closed cometrizable subspaces 
look like the entire space. Let us sketch such a simplified 
construction
of   a connected version of an Ostaszewski space from $\diamondsuit$ (\cite{ostaszewski})
which works for our purpose.
It can be described in the language similar to our unordered split interval:
define an inverse limit system $K_\alpha\subseteq [0,1]^\alpha$ with
$\alpha\leq\omega_1$ containing the point $0^\alpha$ as a nonisolated point. Given $K_\alpha$
define $K_{\alpha+1}\subseteq K_\alpha\times[0,1]^\alpha$ as
the union of $\{0^\alpha\}\times[0,1]$ and the graph of a continuous $f_\alpha: K_\alpha
\setminus\{0^\alpha\}\rightarrow [0,1]$ such that $f_\alpha[U\setminus\{0^\alpha\}]=[0,1]$
for any neighbourhood of $0^\alpha$. The obtained $K=K_{\omega_1}$
contains points of the form $x_r$ for $r\in (0,1]$ and ${0^\alpha}^\frown x_r$ for $r\in (0,1]$  and
point $0^{\omega_1}$.  We have $x_r(\alpha)=f_\alpha(r)$ for all
$r\in (0,1]$ and $({0^\alpha}^\frown x_r)(\beta)=f_\beta(r)$ for $\alpha<\beta<\omega_1$.
At stage $\alpha$ if the $\alpha$-th term of the $\diamondsuit$-sequence  codes a 
subset of $K_\alpha$ which has $0^\alpha$ in the closure we make sure that
the $f_\alpha$ assumes a dense set of values in $[0,1]$ on the intersection of the subset with any
neighbourhood of $0^\alpha$. This way $\{{0^\alpha}^\frown x_r: r\in[0,1]\}$ is in the closure
of the set coded by the $\alpha$-th term of the $\diamondsuit$-sequence. 
Besides this we also require that $f_\alpha$
 assumes a dense set of values on the intersection of sets coded by the previous
$\beta$-th terms of the $\diamondsuit$-sequence for $\beta<\alpha$ with any
neighbourhood of $0^\alpha$. This can be arranged using the recursive argument.
As in the case of the usual Ostaszewski construction we  conclude that 
for every nonmetrizable closed subset of $K$ there is $\alpha<\omega_1$
such that $K$ contains all  points $x$ of
$K$ satisfying $x|\alpha=0^\alpha$. This set is connected. So it must be
collapsed to a point by any continuous surjection onto
a totally disconnected compact space. It is not difficult
to see that the rest of the space may give at most metrizable image.
Since $K$ is compact with a point of uncountable character, $K$
is not hereditarily Lindel\"of. $K$ is actually an $S$-space.

(5) To obtain an $L$-space having the properties of $K$
consider a Souslin line. The separable subspaces of $K$ are metrizable
and $K$ is an $L$-space (\cite{merudin}). What follows is based on a standard argument
going back to Kelley (\cite{kelley}). 
Let $K'\subseteq K$ and $\phi: K'\rightarrow L$ with $L$ totally disconnected.
Consider the family $\mathcal I$ of maximal open intervals $I$  in $K$
such that $I\cap K'=\emptyset$ and the family $\mathcal J$ of maximal open intervals $J$  in $K$
such that $J\subseteq K'$. Since intervals are connected, for any interval
$J\in \mathcal J$ the set  $\phi[J]$ has one element
equal to $\phi(x)$ where  $x$ is any of the endpoints of $J$
as $L$ is totally disconnected. It  follows that $L=\phi[K']=\phi[K'\setminus \bigcup \mathcal J]$.
$K'\setminus \bigcup \mathcal J$ is nowhere dense and the endpoints of the 
intervals from $\mathcal I\cup \mathcal J$ form a dense subset of $K'\setminus \bigcup \mathcal J$.
As $K$ is c.c.c. $\mathcal I\cup \mathcal J$ is countable and so $K'\setminus \bigcup \mathcal J$
is separable and so metrizable and hence $L$ is metrizable as well.

(6) It follows from a result of Shapirowski (Corollary 10' of \cite{shapirovskii}) that any
compact space of countable tightness, in particular, the Souslin line as in (5)
can be continuously irreducibly mapped onto a  Corson compact space. 
Such an irreducible image cannot be metrizable, because it would be separable, and so
the closure of preimage of the dense countable set would contradict the irreducibility.
By Lemma \ref{subquotient}
this Corson compact must have the property that all continuous images of its subspaces
which are totally disconnected are metrizable.

(7) Result of Fremlin \cite{fremlin} says that under Martin's axiom and the negation
of the continuum hypothesis a compact space which carries a Radon measure
of uncountable type maps continuously onto $[0,1]^{\omega_1}$ which contains
a nonmetrizable compact totally disconnected $\{0,1\}^{\omega_1}$.

(8) and  (9). Under Martin's axiom and the negation of the continuum hypothesis
being hereditarily Lindel\"of and being hereditarily separable are equivalent for
compact spaces (\cite{szentmiklossy}). So suppose that a compact $K$ is not hereditarily Lindel\"of
and so has a right separated uncountable sequence. Such a  sequence is
locally countable and so by \cite{balogh} if $K$ is countably tight,  this
sequence is a countable union of discrete subspaces. In any case $K$ has
an uncountable discrete space, so (1) can be applied.

(10) Under Martin's axiom and the negation of the continuum hypothesis nonmetrizable
Corson compacta have  uncountable discrete subspaces (Corollary 5.6. of \cite{marciszewski}).
\end{proof}


Below we present our paradigmatic example of a compact nonmetrizable $K$ 
where all totally disconnected continuous images of closed subspaces are metrizable:

\begin{theorem}\label{luzinexample} Suppose that $\{r_\xi: \xi<\kappa\}\subseteq [0,1]$
is an enumeration of a Luzin set and $f_\xi: [0,1]\setminus\{r_\xi\}\rightarrow [-1,1]$
be any splitting functions. Let  $K\subseteq [0,1]^{\{*\}}\times[-1,1]^{2^\omega}$ be 
the unordered split interval induced by $(f_\xi)_{\xi<2^\omega}$.
Then no nonmetrizable compact subspace of a continuous image of $K$ is totally
disconnected.
\end{theorem}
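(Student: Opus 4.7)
My plan is to argue by contradiction via Lemma \ref{subquotient}: I assume there exist a closed $K'\subseteq K$ and a continuous surjection $\psi:K'\to L$ with $L$ compact, totally disconnected and nonmetrizable, and derive a contradiction. The first step is to isolate
\[T=\{\xi<2^\omega:\psi|(R_\xi\cap K')\text{ is nonconstant}\}\]
and show $T$ must be uncountable. For $\xi\notin T$ the map $\psi$ is constant on $R_\xi\cap K'$; meanwhile any two distinct points of $K'$ with the same $\st$-coordinate lie in a common $R_\eta$ and differ only in the $\eta$-th coordinate. So if $T$ were countable, $\psi$ would factor through the projection of $K'$ to the countable coordinate set $\{\st\}\cup T$, whose image sits inside the compact metric space $[0,1]\times[-1,1]^T$, forcing $L$ metrizable---a contradiction.

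For $\xi\in T$, since $R_\xi\cong[-1,1]$ is connected and $L$ totally disconnected, nonconstancy of $\psi|(R_\xi\cap K')$ prevents $R_\xi\subseteq K'$, so $R_\xi\setminus K'$ is a nonempty open subset of $R_\xi$. I pick a closed interval $[a_\xi',b_\xi']\subseteq[-1,1]$ with rational endpoints such that $\{x_{\xi,t}:t\in[a_\xi',b_\xi']\}\subseteq K\setminus K'$, and then, using compactness of this slice inside the open set $K\setminus K'$, extract a neighbourhood $U^\xi$ of $r_\xi$ in $[0,1]$ with $(U^\xi)_K\cap ([a_\xi',b_\xi'])_{K,\xi}\cap K'=\emptyset$. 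Countable pigeonhole over rational pairs then yields an uncountable $T'\subseteq T$ on which $[a_\xi',b_\xi']=[a',b']$ is constant.

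The heart of the argument is: for distinct $\xi,\eta\in T'$, if $r_\xi\in U^\eta$ then $f_\eta(r_\xi)\notin[a',b']$, because otherwise every $x_{\xi,s}$ would belong to $(U^\eta)_K\cap([a',b'])_{K,\eta}$, forcing $R_\xi\cap K'=\emptyset$ against $\xi\in T$. Writing $V_\eta:=f_\eta^{-1}((a',b'))$, this says $r_\xi\notin V_\eta$ whenever $r_\xi\in U^\eta$ and $\eta\neq\xi$, and $r_\xi\notin V_\xi$ since $r_\xi\notin\dom(f_\xi)$. Thus the uncountable Luzin subset $E:=\{r_\xi:\xi\in T'\}$ is disjoint from the open set $W:=\bigcup_{\eta\in T'}U^\eta\cap V_\eta$. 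On the other hand, the density of $f_\eta[U\setminus\{r_\eta\}]$ in $[-1,1]$ for every neighbourhood $U$ of $r_\eta$ gives $r_\eta\in\overline{U^\eta\cap V_\eta}\subseteq\overline W$, so $E\subseteq\overline W$. Since an uncountable subset of a Luzin set cannot be contained in any nowhere dense set, $\overline E$ has nonempty interior; restricting to an open interval $I\subseteq\mathrm{int}(\overline E)$ on which $E$ is both dense and uncountable (such exists because $E\cap(\overline E\setminus\mathrm{int}(\overline E))$ is countable by the Luzin property), the set $W\cap I$ becomes open and dense in $I$, so $I\setminus W$ is nowhere dense in $[0,1]$ while still containing $E\cap I$, contradicting the Luzin property.

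I expect the main obstacle to be the compactness-plus-pigeonhole construction of the pair $(U^\xi,[a',b'])$ in step two, which converts the topological datum ``$\psi$ is nonconstant on $R_\xi\cap K'$'' into the usable relation $(U^\xi)_K\cap([a',b'])_{K,\xi}\cap K'=\emptyset$ that couples different fibres through the splitting functions. The auxiliary reduction to an interval $I$ where $E$ is dense is equally essential: without it $W$ need not be dense anywhere in $[0,1]$, and the Luzin property would have no nowhere dense target on which to act. Once these two points are in place, the final contradiction is a direct reading of the combinatorial set-up.
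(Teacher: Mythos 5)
Your proof is correct, and after the shared opening reduction it takes a genuinely different route from the paper's. Both arguments begin identically: the set of $\xi$ on which the quotient map is nonconstant on $R_\xi\cap K'$ must be uncountable, via factoring through a countable set of coordinates. From there the paper argues \emph{positively}: it uses the Luzin property once, to find an interval $(a,b)$ in which $\{r_\xi:\xi\in A\}$ is dense, and then shows directly (combining that density with the density of $f_\xi[U\setminus\{r_\xi\}]$) that $R_\xi\subseteq L$ for every $\xi\in A$ with $r_\xi\in(a,b)$; a fully contained fiber is connected and maps to a nondegenerate connected set, killing total disconnectedness. You instead run the contrapositive: total disconnectedness forces every relevant fiber to stick out of $K'$, you convert that into uniform ``forbidden windows'' $(U^\xi)_K\cap([a',b'])_{K,\xi}$ via compactness and a rational pigeonhole, and then trap the uncountable set $E$ inside $\overline{W}\setminus W$, contradicting the Luzin property. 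Your version costs an extra compactness-plus-pigeonhole step that the paper avoids, but it isolates a clean combinatorial statement (the incompatibility of ``$r_\xi\in U^\eta$ and $f_\eta(r_\xi)\in[a',b']$'' for $\xi\neq\eta$ in $T'$) that is close in spirit to condition (2) of Proposition \ref{weaklyluzin}. One small simplification: your final detour through the interval $I$ is unnecessary, since $\overline{W}\setminus W$ is the boundary of the open set $W$ and is therefore automatically nowhere dense, so $E\subseteq\overline{W}\setminus W$ already contradicts the Luzin property; the concern that ``$W$ need not be dense anywhere'' does not arise.
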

\begin{proof} Let $L\subseteq K$ be a closed subspace and 
let $\phi: L\rightarrow L'$ be a continuous surjection. We will  show that 
if $L'$ is nonmetrizable, then it is not totally disconnected.

First note that $A=\{\xi<\kappa: |\phi[R_\xi\cap L]|>1\}$ must be uncountable if 
$L'$ is to be nonmetrizable. Indeed if $A$ were countable, consider
$\psi: L\rightarrow [0,1]^{\{*\}}\times[-1,1]^{A}$ defined by
$\psi(x)=x|(\{*\}\cup A)$ and note that $\phi$ is constant
on sets of the form $\psi^{-1}(\{y\})$  for $y\in [0,1]^{\{*\}}\times[-1,1]^{A}$
because they  are $L\cap R_\xi$ for $\xi\in 2^\omega\setminus A$ or
singletons. It follows that there is a  $\theta: [0,1]^{\{*\}}\times[-1,1]^{A}
\rightarrow L'$ such that $\phi=\theta\circ \psi$. Since $\psi$ is
a closed onto mapping (2.4.8. of \cite{engelking}) it is a quotient map
and so $\theta$ is continuous (2.4.2. of \cite{engelking}). 
But the codomain
of $\psi$ is metrizable, and so $L'$ would be metrizable as well. This
proves that $A$ cannot be countable.

By the defining property of the Luzin set there is an interval
$(a,b)$ for $0<a<b<1$ such that $\{r_\xi: r_\xi\in (a,b), \xi\in A\}$ 
is dense in $(a,b)$. We will show that for every $\xi \in A$ such that 
$r_\xi\in (a, b)$ we have $R_\xi\subseteq L$. This will be enough to
conclude the theorem since $\phi[R_\xi\cap L]$ has at least two points by
the definition of $A$ and $R_\xi$ is a copy of $[-1,1]$ and so connected,
and hence $\phi[R_\xi\cap L]=\phi[R_\xi]$ is a connected subspace of $L'$
which is not degenerated to one point, hence $L'$ is not totally disconnected.

Take $x_{\xi, u}\in R_\xi$ for some $\xi\in A$ such that $r_\xi\in (a, b)$ and 
$u\in (c, d)\subseteq [-1,1]$ for $-1\leq c<d\leq 1$ and consider $(c, d)_{K,\xi}\cap
(r_\xi-1/k, r_\xi+1/k)_K$  for some $k\in \N$ such that
$(r_\xi-1/k, r_\xi+1/k)\subseteq (a,b)$.
By the density of $\{r_\eta: \eta\in A, |R_\eta\cap L|>1\}$ in $(a,b)$ and the 
property of  the splitting functions  in Definition \ref{splitdefinition} we can find $\eta\in A$  such that
such that $r_\eta\in (r_\xi-1/k, r_\xi+1/k)$ and $f_\xi(r_\eta)\in (c,d)$.
Take $t\in [-1,1]$ such that $x_{\eta,t}\in L$.
We have that 
$x_{\eta,t} \in (c, d)_{K,\xi}$ and $x_{\eta, t}\in (r_\xi-1/k, r_\xi+1/k)_K$. So
$x_{\xi, u}\in L$. Since $x_{\xi, u}\in R_\xi$ was arbitrary
we obtain that $R_\xi\subseteq L$, and conclude as above  that $L'$ is not totally disconnected.

\end{proof}

\begin{proposition}\label{weaklyluzin} Let $\kappa$, $M$, $K_\xi$s, 
$\{r_\xi: \xi<\kappa\}$
and $f_\xi: M\setminus\{r_\xi\}\rightarrow K_\xi$ be as in Definition \ref{splitdefinition}.
Suppose moreover that all $K_\xi$s are connected.
Let  $K\subseteq M^{\{*\}}\times \Pi_{\xi<\kappa} K_\xi$ be 
the  split $M$ induced by $(f_\xi)_{\xi<\kappa}$.
Then the following are equivalent:
\begin{enumerate}
\item No nonmetrizable compact subspace of any continuous image of $K$ is totally
disconnected,
\item For every uncountable $A\subseteq \kappa$
the set $A_\xi=\{f_\xi(r_\eta):  \eta\in A\}$ is dense in $K_\xi$
for all but countably many $\xi\in A$,
\item $K$ is hereditarily Lindel\"of,
\item $K$ is hereditarily separable,
\item $K$ has no uncountable discrete subspace.
\end{enumerate}
\end{proposition}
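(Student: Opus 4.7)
The plan is to establish the cycle $(1)\Rightarrow(5)\Rightarrow(2)\Rightarrow(1)$, which makes (1), (2), (5) mutually equivalent, and then independently relate (3) and (4) to (5). The easy directions $(1)\Rightarrow(5)$ and $(3),(4)\Rightarrow(5)$ are immediate: the first is Proposition \ref{properties}(1), and the second holds because an uncountable discrete subspace is neither Lindel\"of nor has a countable dense subset.

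For $(5)\Rightarrow(2)$ I would argue contrapositively. Starting from an uncountable discrete $\{x_\alpha : \alpha<\omega_1\}$ in $K$, I first pass to the type-$1$ case $x_\alpha = x_{\xi_\alpha, t_\alpha}$ with the $\xi_\alpha$ pairwise distinct, noting that any uncountable subset of the other type embeds into the metrizable $M$ (so cannot be discrete) and that having uncountably many $\xi_\alpha$ coincide would produce an uncountable discrete subspace of the metrizable $R_\xi$. Using the basic-neighborhood description of Proposition \ref{splitproperties}(3), together with a pigeonhole on the number of ``extra'' coordinate conditions in the basic witnesses of discreteness, a $\Delta$-system reduction on the coordinate indices involved, and pigeonhole on the countable basis of $M$, I obtain an uncountable $\Gamma\subseteq\omega_1$ and a single basic open $W_{k_0}\subseteq M$ such that $r_{\xi_\alpha}\in W_{k_0}$ for every $\alpha\in\Gamma$ and such that $x_\alpha$ is isolated from the other $x_\beta$ by a neighborhood of the uniform shape $W_{k_0, K}\cap (V_\alpha)_{K,\xi_\alpha}\cap(\text{common finite extras})$. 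Discreteness then forces $f_{\xi_\alpha}(r_{\xi_\beta})\notin V_\alpha$ for all distinct $\alpha,\beta\in\Gamma$, so setting $A = \{\xi_\alpha : \alpha\in\Gamma\}$ produces uncountably many $\xi\in A$ for which $A_\xi$ fails to be dense in $K_\xi$, contradicting (2).

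The main work is $(2)\Rightarrow(1)$, which parallels Theorem \ref{luzinexample} but replaces the density property of the Luzin set by a condensation-point argument available in any compact metrizable $M$. Given closed $L\subseteq K$ and a continuous surjection $\phi:L\to L'$ with $L'$ nonmetrizable, the set $B = \{\xi<\kappa : |\phi[R_\xi\cap L]|>1\}$ is uncountable exactly as in Theorem \ref{luzinexample}: otherwise $\phi$ factors through the closed, hence quotient, projection $L\to M^{\{\st\}}\times\prod_{\xi\in B}K_\xi$, whose image is metrizable, forcing $L'$ metrizable via 2.4.2 and 2.4.8 of \cite{engelking}. For each $\eta\in B$ pick $x_\eta\in R_\eta\cap L$ and fix a countable basis $\{W_k : k\in\N\}$ of $M$. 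For every $k$ with $B_{W_k} := \{\eta\in B : r_\eta\in W_k\}$ uncountable, applying (2) to $B_{W_k}$ shows that only countably many $\xi\in B_{W_k}$ have $(B_{W_k})_\xi = \{f_\xi(r_\eta) : \eta\in B_{W_k}\}$ non-dense in $K_\xi$; since $M$ is second countable, all but countably many elements of $\{r_\eta : \eta\in B\}$ are condensation points of this set. Removing these countably many exceptions (countably per $k$ plus the non-condensation points) leaves uncountably many $\xi\in B$ for which $r_\xi$ is a condensation point of $\{r_\eta : \eta\in B\}$ and $\xi$ is good with respect to every $B_{W_k}$ with $W_k\ni r_\xi$ in the fixed basis. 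For such a $\xi$ and any $u\in K_\xi$, any basic neighborhood $U_K\cap V_{K,\xi}$ of $x_{\xi,u}$ contains some $x_\eta\in L$: shrink $U$ to a basis element $W_k$ with $r_\xi\in W_k\subseteq U$, and use density of $(B_{W_k})_\xi$ in $K_\xi$ to pick $\eta\in B_{W_k}$ with $f_\xi(r_\eta)\in V$, so that $x_\eta\in U_K\cap V_{K,\xi}$. This shows $R_\xi\subseteq L$, and since $R_\xi\cong K_\xi$ is connected with $|\phi[R_\xi\cap L]|>1$, the image $\phi[R_\xi]$ is a nondegenerate connected subspace of $L'$, so $L'$ is not totally disconnected.

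Finally, $(5)\Rightarrow(3)$ and $(5)\Rightarrow(4)$ exploit that $K$ is first countable by Proposition \ref{splitproperties}(3) and hence of countable tightness. An uncountable right-separated sequence in $K$ is locally countable, so by Balogh's theorem \cite{balogh} it contains an uncountable discrete subspace, showing $(5)\Rightarrow(3)$; the analogous result for left-separated sequences, obtained either by a symmetric version of the same Balogh-type analysis or by the standard cardinal-function bounds for countably tight compacta, yields $(5)\Rightarrow(4)$. The delicate step is the condensation-point argument in the proof of $(2)\Rightarrow(1)$, where the weak density supplied by (2) must be upgraded, via the countable basis of $M$ and condensation, into the local density needed to mimic the Luzin-set proof of Theorem \ref{luzinexample}; everything else is pigeonhole and standard compact-space topology.
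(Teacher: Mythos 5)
There are two genuine gaps. First, the implication you label $(5)\Rightarrow(2)$ is argued in the wrong direction: you start from an uncountable discrete subspace (that is, from the failure of (5)) and derive the failure of (2), which is the contrapositive of $(2)\Rightarrow(5)$, not of $(5)\Rightarrow(2)$. Since $(2)\Rightarrow(5)$ already follows from your $(2)\Rightarrow(1)\Rightarrow(5)$, that $\Delta$-system argument is redundant, while the direction actually needed to close your cycle is never given. That direction is short: if there is an uncountable $A\subseteq\kappa$ and nonempty open $V_\xi\subseteq K_\xi$ with $A_\xi\cap V_\xi=\emptyset$ for all $\xi\in A$, then choosing $t_\xi\in V_\xi$ makes $\{x_{\xi,t_\xi}:\xi\in A\}$ discrete, witnessed by the neighbourhoods $(V_\xi)_{K,\xi}$, because $f_\xi(r_\eta)\in A_\xi$ for $\eta\in A\setminus\{\xi\}$; this is exactly the first paragraph of the paper's proof, and it simultaneously gives $(3)\Rightarrow(2)$, $(4)\Rightarrow(2)$ and, via Proposition \ref{properties} (1), $(1)\Rightarrow(2)$. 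As written, nothing in your proposal shows that any of (1), (3), (4), (5) implies (2), so the equivalences do not close.

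Second, and more substantively, $(5)\Rightarrow(3)$ and $(5)\Rightarrow(4)$ cannot be obtained from Balogh's theorem or from general cardinal-function inequalities: the result of \cite{balogh} is a Martin's Axiom theorem (the paper itself invokes it only under MA$+\neg$CH, in parts (8) and (9) of Proposition \ref{properties}), whereas Proposition \ref{weaklyluzin} is a ZFC statement. Moreover, for general compact first countable spaces the implication ``no uncountable discrete subspace $\Rightarrow$ hereditarily Lindel\"of (or hereditarily separable)'' is not a theorem of ZFC: consistently, the Ostaszewski-type compactification described in Proposition \ref{properties} (4) is a compact $S$-space, hence has no uncountable discrete subspace but is not hereditarily Lindel\"of, and the Souslin continuum of Proposition \ref{properties} (5) has countable spread but is not hereditarily separable. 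So (3) and (4) must be derived from the structure of the split space; the paper does this by passing from (2) to its localized form (2') (essentially your condensation-point refinement) and then directly producing countable dense subsets and countable subcovers using the sets $V_K$ and $U_{K,\xi}$. Your $(2)\Rightarrow(1)$ argument, by contrast, is essentially the paper's $(2)\Rightarrow(2')\Rightarrow(1)$ and is correct.
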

\begin{proof}
All the above conditions imply (2): Suppose that there is an uncountable $A\subseteq \kappa$
and an open set $V_\xi\subseteq K_\xi$ such that
 $A_\xi\cap V_\xi=\emptyset$ for all $\xi\in A$. Choose $t_\xi\in V_\xi$ for each $\xi\in A$.
Then $\{x_{\xi, t_\xi}: \xi\in A\}$ is discrete as witnessed by
the neighbourhoods $V_{K, \xi}$ of $x_{\xi, t_\xi}$ as 
$f_\xi(r_\eta)\not\in V_\xi$ for $\eta\not=\xi$. Hence by Proposition \ref{properties} (1)
$K$ has a continuous image with a compact totally disconnected nonmetrizable subspace.

(2) implies the following (2'): For every uncountable $A\subseteq \kappa$
the set $A_{\xi, U}=\{f_\xi(r_\eta):  \eta\in A, r_\eta\in U\}$ is dense in $K_\xi$
for all but countably many $\xi\in A$ and any open $U\subseteq M$ containing $r_\xi$.
Otherwise, using the fact that $M$ is second countable, we would 
obtain an uncountable $A'\subseteq A$ and a fixed $U$ containing $r_\xi$s 
for $\xi\in A'$ such that $A_\xi=\{f_\xi(r_\eta):  \eta\in A, r_\eta\in U\}\supseteq
\{f_\xi(r_\eta):  \eta\in A'\}$ is not dense in $K_\xi$ for any $\xi\in A'$ contradicting (2).

(2') implies (1).
Suppose that $\phi: L\rightarrow L'$ is surjective and $L\subseteq K$ with $L'$
nonmetrizable. As in the proof of Theorem \ref{luzinexample} we obtain
an uncountable $A\subseteq \kappa$ such that $|\phi[R_\xi\cap L]|>1$ 
for all $\xi\in A$.  Using (2') find $\xi_0\in A$ such that $A_{\xi_0, U}$ is dense in $K_{\xi_0}$
for every open $U\subseteq M$ containing $r_{\xi_0}$.
It follows from the definition
of splitting functions (\ref{splitdefinition}) that 
$R_{\xi_0}\subseteq \overline{\{x_{\eta, t}\in L: \eta\in A\setminus \{\xi_0\} \}}\subseteq L$. But
$R_{\xi_0}$ is connected as a homeomorph of $K_\xi$ and so its continuous image which has more
than two points witnesses the fact that $L'$ is not totally disconnected.

(2') implies (3), (4) and so (5).  First let us prove that $K$ is hereditarily separable.
Assume $X\subseteq K$. First assume that $x(\st)=r_\xi$ for no  $x\in X$ nor $\xi<\kappa$.
Then the function sending  $x(\st)$  to $x\in X$ is continuous, as $M$ is second countable
it is hereditary separable so $X$ must be separable as well.
Now assume that $x(\st)=r_\xi$ for some $\xi<\kappa$ for all $x\in X$.
Let $A$ be the set of all $\xi\in \kappa$ such that there is
$x\in X$ satisfying $x(\st)=r_\xi$. By (2') we may assume that
$A_{\xi, U}$ is dense in $K_\xi$ for all $\xi\in A$ and open $U\subseteq M$
such that $r_\xi\in U$
(what is removed is
a countable union of subsets of copies of $K_\xi$ and so hereditarily separable).
Take a countable dense $D\subseteq \{r_\xi: \xi\in A\}$. It follows that
$R_\xi\subseteq {\overline{\{x_{\xi, t_\xi}: r_\xi\in D\}}}$ for
any choice of $t_\xi\in K_\xi$, in particular for such a choice that $x_{\xi, t_\xi}\in X$.
This gives a  countable dense subset of $X$. Combining the cases we obtain
a countable dense subset of $X$  in the general case.

To prove that $K$ is hereditarily Lindel\"of, assume that $X\subseteq K$. As
before we may assume that $x(\st)=r_\xi$ for some $\xi<\kappa$ for all $x\in X$.
Using the fact that $M$ is second-countable we may consider only open
covers $\mathcal U$ of $X$ consisting of sets (see \ref{splitproperties}) of
the form $U_{K,\xi}^\xi$ for $U^\xi\subseteq K_\xi$. These sets are 
unions of $V_K$ for an open in $M$ set $V=f_\xi^{-1}[U^\xi]$ and $\{x_{\xi, t}: t\in  U^\xi\}$. 
The collection of such $V$s has countable subcover as $M$ is hereditarily Lindel\"of.
So it remains  to cover the union of the sets $\{x_{\xi, t}: t\in  U^\xi\}$ such that
$r_\xi\not\in f_\eta^{-1}[U^\eta]$ for any $\eta\not=\xi$ and $U_{K,\eta}^\eta\in \mathcal U$.
If the set $A$ of such $\xi$s were uncountable we would have 
$\{f_\eta(r_\xi): \xi\in A\}\cap U^\eta=\emptyset$ for $\eta\in A$ which would contradict (2).
But if $A$ is countable we easily find a countable subcover using the hereditarily
Lindel\"of property of $K_\xi$s.
\end{proof}

\begin{theorem}\label{macon} It is consistent with MA$+\neg$ CH that 
there is  a nonmetrizable compact space with no
nonmetrizable totally disconnected subspace in any of its continuous images.
\end{theorem}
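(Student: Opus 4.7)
The plan is to realize the theorem as a direct application of Proposition \ref{weaklyluzin} to the Filippov space $\Phi_E$, taking $M=[0,1]^2$ (connected, compact, metrizable, no isolated points), $K_\xi=\T$ (connected), $\{r_\xi:\xi<\omega_1\}=E\subseteq[0,1]^2$, and the splitting functions $f_\xi(x)=(x-r_\xi)/\|x-r_\xi\|$. Note that the hypothesis on splitting functions in Definition \ref{splitdefinition} is automatic: for any neighbourhood $U$ of $r_\xi$, the directions from points of $U\setminus\{r_\xi\}$ to $r_\xi$ fill out all of $\T$. By Proposition \ref{splitproperties} this $K=\Phi_E$ is compact Hausdorff, connected, first countable and contains $\omega_1$ many pairwise disjoint copies $R_\xi\cong\T$, so it is automatically nonmetrizable. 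By Proposition \ref{weaklyluzin} (applied in the direction (2)$\Rightarrow$(1)), the property we want,
\begin{equation*}
\text{no nonmetrizable compact subspace of any continuous image of $K$ is totally disconnected,}
\end{equation*}
is equivalent to the purely combinatorial statement about $E$: for every uncountable $A\subseteq\omega_1$ the set $\{f_\xi(r_\eta):\eta\in A\setminus\{\xi\}\}$ is dense in $\T$ for all but countably many $\xi\in A$.

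Next I would reduce the construction of the required $E$ to the work of Kunen in \cite{kunenloc}. Under MA$+\neg$CH Kunen builds, by a standard ccc forcing / generic Luzin-style argument on $[0,1]^2$, a set $E=\{r_\xi:\xi<\omega_1\}$ such that for every uncountable $A\subseteq E$ and every open arc $I\subseteq\T$, all but countably many $\xi\in A$ satisfy $\{f_\xi(r_\eta):\eta\in A\setminus\{\xi\}\}\cap I\neq\emptyset$; iterating over a countable basis of open arcs of $\T$ (and taking a countable union of countable exceptional sets) yields exactly condition (2) of Proposition \ref{weaklyluzin}. This is where MA$+\neg$CH is used, through the ccc density arguments that extend $E$ at each stage to meet countably many dense requirements simultaneously.

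Having fixed such an $E$ in a model of MA$+\neg$CH, the proof closes quickly: condition (2) of Proposition \ref{weaklyluzin} is verified, so (1) holds for $\Phi_E$, and Proposition \ref{splitproperties} ensures $\Phi_E$ is nonmetrizable and compact Hausdorff. The remark at the end of the theorem statement (that split intervals or other examples can be arranged) can be obtained analogously: replace $M=[0,1]^2$ by $M=[0,1]$ and $K_\xi=\T$ by $K_\xi=[-1,1]$, pick $r_\xi\in[0,1]$ and splitting functions $f_\xi:[0,1]\setminus\{r_\xi\}\to[-1,1]$ with dense image in every punctured neighbourhood, and run the same ccc genericity argument to make the combinatorial condition (2) hold on the $r_\xi$'s.

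The main obstacle is the construction of $E$ itself, and in particular the verification that the natural forcing (essentially finite approximations to $E$ together with finite sets of open arcs that must be met) is ccc; however this is precisely the content of Theorem 2.5 of \cite{kunenloc}, so for our purposes it can be cited rather than reproved. The conceptual work of this theorem is therefore the identification (via Proposition \ref{weaklyluzin}) of Kunen's combinatorial property of $E$ as exactly the condition needed for the totally-disconnected-image nonreflection, rather than any new forcing argument.
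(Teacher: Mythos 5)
Your proposal is correct and follows essentially the same route as the paper: cite Kunen's consistency result for a Filippov space $\Phi_E$ under MA$+\neg$CH and feed it into Proposition \ref{weaklyluzin}. The only (immaterial) difference is that the paper enters Proposition \ref{weaklyluzin} through clause (3), since Kunen's theorem directly states that $\Phi_E$ is hereditarily Lindel\"of, whereas you enter through the equivalent combinatorial clause (2), which requires unwinding Kunen's construction a bit further than necessary.
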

\begin{proof} We will use  the result of Kunen from \cite{kunenloc} (Theorems 2.5 and 3.3.)
which says that it is consistent with MA$+\neg$ CH  that there is 
an $E=\{r_\xi:\xi<\omega_1\}\subseteq [0,1]^2$ and a Filippov space $\Phi_E$ which is hereditarily Lindel\"of.
 The Filippov space is a split square $[0,1]^2$ where the splitting functions are 
$f_\xi: [0,1]^2\setminus\{r_\xi\}\rightarrow \T$   given by $f_\xi(x)={{x-r_\xi}\over {||x-r_\xi||}}$
where, $\T$ is the unit sphere in $\R^2$.  By Proposition \ref{weaklyluzin} the hereditary Lindel\"of property
implies the required property of $\Phi_E$.
\end{proof}

Analyzing the proof of Theorem 3.3. of \cite{kunenloc} one sees that similar arguments
give e.g., nonmetrizable split intervals with the properties as in Theorem \ref{macon}
consistent with MA$+\neg$CH.

Recall that a subset $Y$ of a Banach space $X$ is called $r$-equilateral if and only if
$\|y_1-y_2\|=r$ for any two distinct $y_1, y_2\in Y$, it is equilateral if it is $r$-equilateral 
for some $r\in \R$. In \cite{equilateral} using unordered split intervals we consistently constructed 
examples of nonseparable $C(K)$s without uncountable equilateral sets. 
This implies that $K$ cannot have a compact subspace with a totally disconnected 
nonmetrizable continuous image $L$ because the functions $\chi_A-\chi_{L\setminus A}$
for clopen $A\subseteq L$
which form a $2$-equilateral set in $C(L)$ would give rise to 
an uncountable  $2$-equilateral set in $C(K)$.

However, we proved in \cite{equilateral}  that already
MA and the negation of CH implies that every nonseparable
 $C(K)$ contains in its unit sphere  an uncountable $2$-equilateral set. Theorem \ref{macon}
sheds more light on this topic. In \cite{equilateralcofk} S. Mercourakis and G. Vassiliadis
list (Theorem 2 and Corollary 1) many properties of a compact $K$ which imply
the existence of uncountable $2$-equilateral sets. Theorem \ref{macon} 
and Proposition \ref{properties} show that it is possible to have an uncountable 
$2$-equilateral set for none of these reasons:

\begin{corollary}\label{equilateral} It is consistent that there is a compact Hausdorff space $K$ 
such that $K$ is hereditarily separable, hereditarily Lindel\"of, does not carry
a Radon measure of uncountable type nor the $C(K)$ contains an isometric copy
of $C(L)$ for $L$ totally disconnected,
but $C(K)$ contains an uncountable $2$-equilateral set in the unit sphere.
\end{corollary}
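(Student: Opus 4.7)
The plan is to take $K=\Phi_E$, the Filippov space furnished by Theorem \ref{macon}, inside a model of MA$+\neg$CH in which Kunen's hereditarily Lindel\"of $E\subseteq[0,1]^2$ exists; all four conditions then follow by assembling results already in hand, with essentially no new work required.

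First I would dispatch the topological properties by direct appeal to Proposition \ref{properties}. Since $K=\Phi_E$ is nonmetrizable compact, has no nonmetrizable totally disconnected subspace in any of its continuous images (by Theorem \ref{macon}), and we are working under MA$+\neg$CH, items (7), (8) and (9) of Proposition \ref{properties} apply and give respectively that $K$ carries no Radon measure of uncountable type, is hereditarily separable, and is hereditarily Lindel\"of.

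Next I would handle the Banach-space conditions. For the nonexistence of an isometric copy of $C(L)$ with $L$ nonmetrizable totally disconnected, I would invoke Holszty\'nski's theorem as recalled in the introduction: any such isometric embedding $C(L)\hookrightarrow C(K)$ must be induced by a continuous surjection from a closed subspace $K'\subseteq K$ onto $L$, and the defining property of $\Phi_E$ from Theorem \ref{macon} rules this out. The existence of an uncountable $2$-equilateral set in the unit sphere of $C(K)$ is then immediate from the result of \cite{equilateral} quoted in the paragraph preceding the corollary, namely that under MA$+\neg$CH every nonseparable $C(K)$ admits such a set; and $C(\Phi_E)$ is nonseparable because $\Phi_E$ is a nonmetrizable compact space.

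Strictly speaking there is no hard step: the content of the corollary lies in the combination rather than in any individual clause, and the only point to verify is the joint consistency of Kunen's construction with MA$+\neg$CH, which is already built into the statement of Theorem \ref{macon}. The payoff is that $\Phi_E$ provides a single example realizing an uncountable $2$-equilateral set while simultaneously failing every structural hypothesis on the Mercourakis--Vassiliadis list from \cite{equilateralcofk}, so the list of sufficient conditions given there is not exhaustive.
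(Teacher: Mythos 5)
Your proposal is correct and is precisely the argument the paper intends: the corollary is stated without a separate proof because it is exactly the combination of Theorem \ref{macon}, Proposition \ref{properties} (7)--(9), Holszty\'nski's theorem, and the MA$+\neg$CH result of \cite{equilateral} quoted in the preceding paragraph, which is what you assemble. The only (harmless) point worth noting is that the clause about isometric copies of $C(L)$ should be read, as you do, with $L$ nonmetrizable (equivalently $C(L)$ nonseparable), since separable $C(L)$ for $L$ totally disconnected metrizable embeds isometrically into any $C(K)$ containing a copy of $C([0,1])$.
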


\begin{theorem}\label{ocaresult} Assume OCA.  Suppose that $\kappa$,
$M, K_\xi$, $\{r_\xi:\xi<\kappa\}$ are as in Definition \ref{splitdefinition}.
Let $K$ be a nonmetrizable  split $M$ induced by splitting functions $f_\xi: M\setminus\{r_\xi\}\rightarrow K_\xi$.
Then $K$ has a compact subspace with nonmetrizable continuous totally disconnected image.
\end{theorem}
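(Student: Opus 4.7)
The plan is to show that $K$ contains an uncountable discrete subspace; combined with Proposition \ref{properties}(1) and Lemma \ref{subquotient}, this produces a compact subspace of $K$ admitting a nonmetrizable totally disconnected continuous image. Since $K$ is nonmetrizable, the indexing cardinal $\kappa$ must be uncountable, because otherwise $K$ would embed as a subspace of a countable product of metrizable spaces. Discarding those indices for which $K_\xi$ is a single point, I may assume every $K_\xi$ has at least two points, and by restricting to an uncountable set of indices identified with $\omega_1$ I may assume each $K_\xi$ has been embedded in the Hilbert cube $[0,1]^{\N}$, so that a single countable base $\mathcal{B}$ of open sets of $[0,1]^{\N}$ is available.

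For each $\xi<\omega_1$ I would like to choose a point $t_\xi\in K_\xi$ and basic open sets $U_\xi\in\mathcal{B}_M$, $V_\xi\in\mathcal{B}$ with $r_\xi\in U_\xi$ and $t_\xi\in V_\xi\cap K_\xi$. The candidate discrete subspace is $\{x_{\xi,t_\xi}:\xi\in A\}$ for an uncountable $A\subseteq\omega_1$ still to be produced, with candidate separating neighborhoods $U_\xi\cap(V_\xi\cap K_\xi)_{K,\xi}$. By the description of the basis at $x_{\xi,t_\xi}$ in Proposition \ref{splitproperties}(3), this candidate is discrete exactly when for every $\xi\in A$ and every $\eta\in A\setminus\{\xi\}$ one has $r_\eta\notin U_\xi$ or $f_\xi(r_\eta)\notin V_\xi$.

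I would then apply OCA. Treating each $f_\xi$ as an element of the Polish space $C(M,[0,1]^{\N})$, encode the sequence $(r_\xi,t_\xi,U_\xi,V_\xi,f_\xi)_{\xi<\omega_1}$ as a subset $X$ of a Polish space. Define a partition $[X]^2=K_0\cup K_1$ by declaring a pair $\{\xi,\eta\}$ to lie in $K_0$ exactly when the separating condition holds in both directions: $r_\eta\notin U_\xi$ or $f_\xi(r_\eta)\notin V_\xi$, and the symmetric condition with $\xi$ and $\eta$ swapped. Openness of $K_0$ in $[X]^2$ follows from the continuity of the evaluation map $C(M,[0,1]^{\N})\times M\to[0,1]^{\N}$ together with the fact that the indexing parameters $U_\xi$ and $V_\xi$ range over countable, hence discrete, bases. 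A preliminary pressing-down or $\Delta$-system reduction, making the pair $(U_\xi,V_\xi)$ constant along an uncountable subfamily, makes the OCA output more tractable.

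If OCA produces an uncountable $K_0$-homogeneous $Y\subseteq X$, the corresponding uncountable $A\subseteq\omega_1$ immediately yields the required discrete subspace $\{x_{\xi,t_\xi}:\xi\in A\}$ of $K$. The main obstacle is ruling out the second OCA alternative, namely $X=\bigcup_{n\in\N}X_n$ with $[X_n]^2\subseteq K_1$ for every $n$. On each such $X_n$, for any two indices $\xi,\eta$ with encodings in $X_n$ one has $r_\eta\in U_\xi$ and $f_\xi(r_\eta)\in V_\xi$ up to symmetry, so the splitting values are pinned to countably many basic-open targets. Following Kunen's strategy in Theorems 2.5 and 3.3 of \cite{kunenloc} for Filippov spaces, the hard part is to convert this entanglement into a countable separating family of continuous functions on $K$, contradicting the nonmetrizability hypothesis. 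Adapting that Filippov-specific analysis to the general split $M$ setting is the technical heart of the proof.
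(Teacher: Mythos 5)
There is a genuine gap, and it is structural. Your entire plan is to produce an uncountable discrete subspace of $K$ and then invoke Proposition \ref{properties}(1), but this cannot succeed for all nonmetrizable split $M$'s: the classical split interval (all $K_\xi=\{0,1\}$) is a nonmetrizable split $[0,1]$ which is hereditarily separable, hence has no uncountable discrete subspace whatsoever. For that space no choice of open partition can make the first OCA alternative deliver what you want, and the second alternative cannot be refuted from nonmetrizability, since the space exists and is nonmetrizable. Your preliminary reduction only discards one-point fibers, so two-point fibers remain in play and defeat the strategy. The paper handles exactly this by a case split: if uncountably many $K_\xi$ have exactly two points $t_\xi^1,t_\xi^2$, one does not look for a discrete set at all, but simply projects $K$ onto $\prod_{\xi\in E}\{t_\xi^1,t_\xi^2\}$; the image is already totally disconnected and is nonmetrizable because no countable set of coordinates separates its points, and no OCA is needed. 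The discrete-subspace route is reserved for the remaining case, where uncountably many $K_\xi$ have at least three points, and the three points are essential (see below).

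The second gap is that even in the case where your strategy is viable, the OCA argument is left unfinished: you explicitly defer the analysis of the alternative $X=\bigcup_n X_n$ with $[X_n]^2\subseteq K_1$ to ``adapting Kunen's Filippov-specific analysis,'' which is precisely the technical content a proof must supply. The paper avoids redoing this work by quoting Theorem 4.3 of \cite{kunenloc} as a black box: under OCA, a compact space $K$ with a continuous map $\pi$ onto a metric space has an uncountable discrete subspace provided one exhibits, over each point $y$ of an uncountable set, three pairwise disjoint open sets $U_y^0,U_y^1,U_y^2$ with $\pi[U_y^i]\cap\pi[U_y^j]=\{y\}$ for $i\neq j$. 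This is where ``at least three points in $K_\xi$'' enters, since a two-point fiber cannot carry three disjoint nonempty open sets; the verification of the hypothesis is then a one-line choice of $U_{r_\xi}^i=(U(\xi,i))_{K,\xi}$. A further technical flaw in your setup: the class $K_0$ you define is a positive Boolean combination of conditions of the form $r_\eta\notin U_\xi$ and $f_\xi(r_\eta)\notin V_\xi$ with $U_\xi,V_\xi$ open, so $K_0$ is \emph{closed}, not open, in $[X]^2$, and OCA does not apply to it as stated; one must reformulate with closures or complements of closed sets. In summary, the endgame (uncountable discrete set plus Proposition \ref{properties}(1)) matches the paper's second case, but the missing two-point case and the unproved OCA dichotomy analysis leave the proposal incomplete.
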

\begin{proof} As $K$ is nonmetrizable, uncountably many $K_\xi$s must be nondegenerate.
The first case is that there is an uncountable
$E\subseteq \kappa$ such that  $K_\xi$s contain just two points $t_\xi^1, t_\xi^2$ for all $\xi\in E$.
Then consider $\phi:K\rightarrow \{t_\xi^1, t_\xi^2\}^E$ defined by $\phi(x)(\xi)=x(\xi)$ for
all $x\in K$ and $\xi\in E$. Of course $\{t_\xi^1, t_\xi^2\}^E$ is totally disconnected and 
the image $\phi[K]$ is nonmetrizable since 
the points $\phi(x_{\xi, t_\xi^1})$, $\phi(x_{\xi, t_\xi^1})$ witness the fact that 
no countable set of coordinates can separate the points of $\phi[K]$.
In the second, the nontrivial case when uncountably many $M_\xi$s contain
more
than two points we  use Theorem 4.3. of \cite{kunenloc}. 
According to it when $\pi: K\rightarrow M$
is defined by $\pi(x)=x(\st)$ it is enough to find an uncountable $E\subseteq M$ 
and disjoint open $U_y^i$ for $y\in E$ and $i\in \{0,1,2\}$ with $\pi[U_y^i]\cap
\pi[Y_y^j]=\{y\}$ for any two distinct $i,j\in \{0,1,2\}$.  Let
$E$ be the set of all elements $\xi$ of $\kappa$ such that $M_\xi$ has at least three points.
Find open pairwise disjoint $U(\xi, i)\subseteq M_\xi$ for $i\in \{0,1, 2\}$ and all $\xi\in E$.
Now note that $U_{r_\xi}^i=(U(\xi, i))_{K,\xi}$ works. By 4.3 of \cite{kunenloc}
$K$ has an uncountable discrete set, so use Proposition \ref{properties} (1).
\end{proof}

By analyzing the proof of Theorem 4.3 of \cite{kunenloc} one notes that the applications of
OCA yield uncountable subsets  $E\subseteq \kappa$ and  $U_\xi\subseteq K_\xi$ for $\xi\in E$
such that $f_\xi(r_\eta)\not\in U_\xi$ for any distinct $\xi,\eta\in E$ (or
$f_\xi(r_\eta)\in U_\xi$ if $U_\xi$ is the complement of the previous $U_\xi$). 

On the other hand in \cite{equilateral}  we consistently constructed 
an unordered split interval $K$ induced by splitting functions $f_\xi: [0,1]\setminus\{r_\xi\}
\rightarrow [-1,1]$ for some
$r_\xi\in [0,1]$ and all $\xi<\omega_1$ such that given any $k\in \N$ and
any nonempty open subintervals $I_1, ..., I_k, J_1, ..., J_k$ of $[-1,1]$ and any collection
$\{\{\alpha_1^\xi, ..., \alpha_k^\xi\}: \xi<\omega_1\}$ of pairwise disjoint subsets 
of $\omega_1$,  there are $\xi<\eta<\omega_1$ such that for all $i\leq k$ we have
$$f_{\alpha^\xi_i}(r_{\alpha^\eta_i})\in I_i, \ {\rm and} \ 
f_{\alpha^\eta_i}(r_{\alpha^\xi_i})\in J_i.$$
This shows (see also 2.6 in \cite{kunenloc} and \S5 of \cite{abraham1}) 
that the notion of $(f_\xi)_{\xi<\kappa}$-entangled sets
make sense for arbitrary split compact space even in a nonsymmetric setting (i.e.,
when $I_i\not=J_i$.)

\bibliographystyle{amsplain}

\begin{thebibliography}{20}


\bibitem{abraham1} U.  Avraham, S. Shelah, \emph{Martin's axiom does not imply that every 
two $\aleph_1$-dense sets of reals are isomorphic}. Israel J. Math. 38 (1981), no. 1-2, 161--176. 


\bibitem{rn} A. Avil\'es, P.  Koszmider,  \emph{A continuous image of a Radon-Nikodym
 compact space which is not Radon-Nikodym}. Duke Math. J. 162 (2013), no. 12, 2285--2299.

\bibitem{balogh}  Z. Balogh, \emph{Locally nice spaces under Martin's axiom}. 
Comment. Math. Univ. Carolin. 24 (1983), no. 1, 63--87.

\bibitem{classification} C.  Bessaga, A. Pe\l czy\'nski, \emph{Spaces of continuous functions. IV. On isomorphical classification of spaces of continuous functions}. Studia Math. 19 1960, 53--62. 


\bibitem{biorthogonal} C. Brech, P. Koszmider, 
\emph{On biorthogonal systems whose functionals are finitely supported}. 
Fund. Math. 213 (2011), no. 1, 43--66. 


\bibitem{cpol} K. Ciesielski, R.  Pol, 
\emph{A weakly Lindel\"of function space $C(K)$ without any continuous injection into $c_0(\Gamma)$}
 Bull. Polish Acad. Sci. Math. 32 (1984), no. 11-12, 681--688.


\bibitem{ditor} S. Ditor, \emph{On a lemma of Milutin concerning averaging operators in continuous function spaces}. Trans. Amer. Math. Soc. 149 (1970), 443--452. 

\bibitem{twoapplications} A. Dow, 
\emph{Two applications of reflection and forcing to topology}. General topology and its relations to modern analysis and algebra, VI (Prague, 1986), 155--172,
Res. Exp. Math., 16, Heldermann, Berlin, 1988. 

\bibitem{empty} A. Dow, 
\emph{An empty class of nonmetric spaces.}
Proc. Amer. Math. Soc. 104 (1988), no. 3, 999--1001. 

\bibitem{submodels} A. Dow, 
\emph{An introduction to applications of elementary submodels to topology}.
Topology Proc. 13 (1988), no. 1, 17--72. 

\bibitem{dtw2} A. Dow, F.  Tall, W. Weiss, 
\emph{New proofs of the consistency of the normal Moore space conjecture. II.}
Topology Appl. 37 (1990), no. 2, 115--129. 

\bibitem{djp}  A. Dow, H. Junnila, J.  Pelant, \emph{Chain conditions and weak topologies}. 
Topology Appl. 156 (2009), no. 7, 1327--1344.

\bibitem{dzamonja} M. Dzamonja, I. Juhasz, \emph{CH, a problem of Rolewicz and bidiscrete systems}. Topology Appl. 158 (2011), no. 18, 2485--2494.

\bibitem{engelking}  R. Engelking, \emph{General topology}. Translated from the Polish by the author. Second edition. Sigma Series in Pure Mathematics, 6. Heldermann Verlag, Berlin, 1989. 

\bibitem{fedorchuk} V. Fedorchuk, \emph{A bicompactum whose infinite closed subsets are all n-dimensional}. (Russian) Mat. Sb. (N.S.) 96(138) (1975), 41--62, 167.

\bibitem{fedorchuksur}  V. Fedorchuk, \emph{Fully closed mappings and their applications.} (Russian) Fundam. Prikl. Mat. 9 (2003), no. 4, 105--235; translation in J. Math. Sci. (N. Y.) 136 (2006), no. 5, 4201--4292.

\bibitem{filippov} V. Filippov, \emph{Perfectly normal bicompacta}. (Russian) 
Dokl. Akad. Nauk SSSR 189 1969 736--739.

\bibitem{godefroy} C. Finet, G.  Godefroy, 
\emph{Biorthogonal systems and big quotient spaces}. Banach space theory (Iowa City, IA, 1987), 87--110,
Contemp. Math., 85, Amer. Math. Soc., Providence, RI, 1989. 

\bibitem{fremlin}  D. Fremlin, \emph{On compact spaces carrying Radon measures of uncountable Maharam type}. Fund. Math. 154 (1997), no. 3, 295--304.

\bibitem{godefroypams} G.  Godefroy, J. Pelant, J. Whitfield, Z. Zizler, \emph{Banach 
space properties of Ciesielski-Pol's $C(K)$ space}. Proc. Amer. Math. Soc. 103 (1988), no. 4, 1087--1093. 

\bibitem{holsztynski} W. Holszty\'nski, \emph{Continuous mappings induced by isometries of spaces of continuous function}. Studia Math. 26 1966 133--136. 

\bibitem{kania} T. Kania, T. Kochanek, \emph{Uncountable sets of unit
 vectors that are separated by more than $1$.}
Preprint: {\tt arxiv.org/abs/1503.08166}

\bibitem{kelley}  J. Kelley, \emph{Measures on Boolean algebras}. Pacific J. Math. 9 1959 1165--1177.

\bibitem{semi}  P. Koszmider, \emph{Semimorasses and nonreflection at singular cardinals}. Ann. Pure Appl. Logic 72 (1995), no. 1, 1--23.

\bibitem{few} P. Koszmider, \emph{Banach spaces of continuous functions with few operators}. 
Math. Ann. 330 (2004), no. 1, 151--183.


\bibitem{opit} P. Koszmider,  \emph{The interplay between compact spaces and the Banach spaces of   their continuous functions}, in Open Problems in Topology 2; ed. Elliott Pearl,
  Elsevier 2007.

\bibitem{rolewicz} P. Koszmider, \emph{On a problem of Rolewicz about Banach spaces that admit support sets}. 
J. Funct. Anal. 257 (2009), no. 9, 2723--2741. 

\bibitem{fewsur} P. Koszmider, \emph{A survey on Banach spaces $C(K)$
 with few operators},  Rev. R. Acad. Cienc. Exactas Fis. Nat. Ser. A Math. RACSAM 104 (2010), no. 2, 309--326.

\bibitem{cardinal}  P. Koszmider, \emph{Some topological invariants and biorthogonal 
systems in Banach spaces}. Extracta Math. 26 (2011), no. 2, 271--294.

\bibitem{equilateral} P. Koszmider, \emph{Uncountable equilateral sets in Banach spaces of the form $C(K)$}
Preprint 2015, {\tt http://arxiv.org/abs/1503.06356}


\bibitem{kunenloc} K.  Kunen, \emph{Locally connected hereditarily Lindel\"of compacta}. Topology Appl. 158 (2011), no. 18, 2473--2478.


\bibitem{marciszewski} W. Marciszewski, 
\emph{Order types, calibres and spread of Corson compacta}.
Topology Appl. 42 (1991), no. 3, 291--299. 

\bibitem{equilateralcofk} S. Mercourakis, G. Vassiliadis, 
\emph{Equilateral Sets in Banach Spaces of th form $C(K)$}
Preprint: {\tt http://arxiv.org/abs/1501.06381}.

\bibitem{nyikos}  P. Nyikos, \emph{The theory of nonmetrizable manifolds}. 
Handbook of set-theoretic topology, 633--684, North-Holland, Amsterdam, 1984. 

\bibitem{ostaszewski} A.  Ostaszewski, \emph{On countably compact, perfectly normal spaces}. J. London Math. Soc. (2) 14 (1976), no. 3, 505--516. 

\bibitem{diss} A. Pe\l czy\'nski, \emph{Linear extensions, linear averagings, and their applications to linear topological classification of spaces of continuous functions}. Dissertationes Math. Rozprawy Mat. 58, 1968.


\bibitem{rosenthalacta}  H. Rosenthal, \emph{On injective Banach spaces and the spaces $L^\infty(\mu)$
 for finite measure $\mu$}. Acta Math. 124 1970 205--248.

\bibitem{merudin} M. E. Rudin  \emph{Lectures on set theoretic topology}, American Mathematical Society, Providence, R.I., 1975. 

\bibitem{zenor} M. E. Rudin, P.  Zenor, 
\emph{A perfectly normal nonmetrizable manifold}.
Houston J. Math. 2 (1976), no. 1, 129--134.

\bibitem{semadeni}  Z. Semadeni, \emph{Banach spaces of continuous functions}. 
Vol. I. Monografie Matematyczne, Tom 55. PWN---Polish Scientific Publishers, Warsaw, 1971. 

\bibitem{shapirovskii} B. Shapirovskii, 
\emph{Special types of embeddings in Tychonoff cubes. Subspaces of $\Sigma$-products and cardinal invariants}. Topology, Vol. II (Proc. Fourth Colloq., Budapest, 1978), pp. 1055--1086,
Colloq. Math. Soc. Janos Bolyai, 23, North-Holland, Amsterdam-New York, 1980. 

\bibitem{szentmiklossy} Z. Szentmiklossy, 
\emph{S-spaces and L-spaces under Martin's axiom},
Topology, Vol. II (Proc. Fourth Colloq., Budapest, 1978), pp. 1139--1145,
Colloq. Math. Soc. Janos Bolyai, 23, North-Holland, Amsterdam-New York, 1980. 

\bibitem{partition} S.  Todorcevic, \emph{Partition problems in topology}. Contemporary Mathematics, 84. American Mathematical Society, Providence, RI, 1989.


\bibitem{stevobaire}  S. Todorcevic, \emph{Compact subsets of the first Baire class}. 
J. Amer. Math. Soc. 12 (1999), no. 4, 1179--1212. 

\bibitem{stevobiorth} S. Todorcevic, 
\emph{Biorthogonal systems and quotient spaces via Baire category methods},
 Math. Ann. 335 (2006), no. 3, 687--715.

\bibitem{vaughan} J. Vaughan,
\emph{On Dow's reflection theorem for metrizable spaces.}
Topology Proc. 22 (1997), Spring, 351--361. 

\end{thebibliography}

\end{document}